\newcommand{\R}{\mathbb R}
\newcommand{\Z}{\mathbb Z}
\newcommand{\C}{\mathbb C}
\newcommand{\cD}{\mathcal D}
\newcommand{\f}{\mathfrak f}
\newcommand{\g}{\mathfrak g}
\newcommand{\gl}{\mathfrak{gl}}
\newcommand{\fsl}{\mathfrak{sl}}
\newcommand{\fso}{\mathfrak{so}}
\newcommand{\hf}{\mathfrak h}
\newcommand{\lf}{\mathfrak l}
\newcommand{\of}{\mathfrak{so}}
\newcommand{\Hf}{\mathfrak{H}}
\newcommand{\om}{\omega}
\newcommand{\sll}{\mathfrak{sl}}
\newcommand{\Ad}{\operatorname{Ad}}
\newcommand{\ad}{\operatorname{ad}}
\newcommand{\Aut}{\operatorname{Aut}}
\newcommand{\aut}{\operatorname{aut}}
\newcommand{\Flag}{\operatorname{Flag}}
\newcommand{\im}{\operatorname{im}}
\newcommand{\gr}{\operatorname{gr}}
\newcommand{\Hom}{\operatorname{Hom}}
\newcommand{\Prol}{\operatorname{Prol}}
\newcommand{\diag}{\operatorname{diag}}
\DeclareRobustCommand{\amgiS}{\text{\reflectbox{$\Sigma$}}}
\newtheorem{thm}{Theorem}
\newtheorem{thmm}{Theorem}
\newtheorem{prop}{Proposition}
\theoremstyle{definition}
\newtheorem{df}{Definition}
\theoremstyle{remark}
\begin{document}

\title[Extrinsic Geometry and Differential Equations of $\fsl_3$-type]{Extrinsic Geometry and Linear Differential Equations of $\fsl_3$-type}
\author{Boris Doubrov}
\author{Tohru Morimoto}

\keywords{Extrinsic geometry, filtered manifold, osculating map, involutive systems of linear differential equations, extrinsic Cartan connection, Cayley's surface, adjoint variety}
\subjclass[2010]{Primary 53A55, 53C24; Secondary 53C30, 53D10. }

\begin{abstract}
As an application of the general theory on extrinsic geometry~\cite{DMM1},
we investigate extrinsic geometry in frag varieties and systems of linear PDE's for a class of special interest associated with the adjoint representation of $\fsl(3)$. We carry out a complete local classification of the homogeneous structures in this class.
As a result, we find 7 kinds of new systems of linear PDE's of second order on a 3-dimensional contact manifold each of which has a solution space of dimension 8.
Among them there are included a system of PDE's called contact Cayley's surface and one which has $\fsl(2)$ symmetry.
\end{abstract}

\maketitle

\maketitle
\tableofcontents

\section*{0. Introduction}

In our previous paper~\cite{DMM1} we have developed a general unified theory for extrinsic geometry in flag varieties and for geometry of linear differential equations.

In the present paper we apply it to a remarkable concrete class of extrinsic geometries and linear differential equations associated with the adjoint representation of $\mathfrak{sl}(3)$, and we carry through detailed studies on them. As one of the main goals, we then give a complete classification of the homogeneous structures in this class.

According to~\cite{DMM1}, it is the osculating maps 
\[
\varphi \colon (M, \f) \to L/L^0\subset \Flag(V, \phi)
\] 
that play a principal role in general extrinsic geometry,  where $(M, \f)$ is a filtered manifold, $\Flag(V,\phi )$ denotes the flag variety consisting of all the (descending) filtrations of a vector space $V$ isomorphic to a fixed filtration $\phi$, $L$ is a Lie subgroup of $GL(V)$, and $L^0=\phi^0L$ is the isotropy subgroup which fixes $\phi$.

Note that the frag variety $\Flag(V, \phi)$ is a homogeneous space $GL(V)/ \phi^0GL(V)$. More over it is a filtered manifold with an invariant tangential filtration defined by the induced natural filtration $\{\phi^p\mathfrak{gl}(V)\}$ of $\mathfrak{gl}(V)$.

We say that the map $\varphi$ is osculating if it satisfies: 
\[
\underline{\f^p}\,\underline{\varphi^q} \subset \underline{\varphi^{p+q}}
\]
which is equivalent to saying that $\varphi$ is a morphism of filtered manifolds.

We say also that to osculating maps $\varphi \colon(M, \f) \to L/L^0$ and $\varphi' \colon (M', \f') \to L/L^0$ are equivalent if there exist 
an isomorphism $h\colon (M,\f) \to (M',\f')$ of filtered manifolds and an element $a\in L$ such that $\Lambda_a \circ \varphi=\varphi'\circ h$. 

We remark that extrinsic geometry in frag varieties can be identified with the geometry of liner differential equations by virtue of a categorical isomorphism between the category of the osculating maps in frag varieties and that of weighted involutive systems  of linear differential equations~\cite{Mor2002,DMM1}.

The equivalence problem in these geometries is settled as follows: For an osculating map $\varphi$ there is associated, to each point $x \in M$, the first order approximation of $\varphi$, $\gr \varphi _x=\bigoplus \varphi^p_x / \varphi^{p+1}_x$, which is not only a graded vector space but also turns to be a $\gr\f_x$-module, and is called the symbol of $\varphi$ at $x$.

We say that an osculating map $\varphi\colon (M, \f) \to L/L^0\subset \Flag(V, \phi)$ has a constant symbol $(\g_-, V,\phi,L)$
if there exists a nilpotent graded Lie algebra $\g_-=\bigoplus_{p<0}\g_p$ represented in $\gr\mathfrak{gl}(V)$ as a graded subalgebra of $\lf_{-}$ such that $(\gr\f_x,\gr\varphi_x)$ is isomorphic  to $(\g_-,\gr V)$. 

We then consider the subcategory $\mathcal{EXG}(\sigma)$ of the osculating maps of constant symbol $\sigma =(\g_-,V,\phi, L)$.

For simplicity we may assume that the filtered Lie algebra $\lf$ corresponding to $L$ is graded. An important algebraic object which characterize the geometry is the relative prolongation $\g$ of $\g_-$ in $\lf$ defined to be the maximal graded subalgebra of $\lf$ having $\g_-$ as its negative part.
In~\cite{DMM1} we have given an algorithm to find a complete system of invariant $\chi$ for an osculating map $\varphi$ In $\mathcal{EXG}(\g_-,V,\phi;L)$ by constructing (semi-)canonically a series of bundles over $M$,
\[
	Q^{(0)}\leftarrow \cdots Q^{(i)} \leftarrow Q^{(i+1)}\cdots \leftarrow Q^{(k)}=Q
\]
with $Q^{(i)} \leftarrow Q^{(i+1)}$ being a principal fiber bundle with structure group $G_{i+1}$, $\operatorname{Lie}(G_{i+1})=\g_{i+1}$
and then by defining an $\lf$-valued 1-form $\omega$ on $Q$ which satisfies $d\omega + \tfrac12 [
\omega,\omega]=0$ and a vector valued function $\chi \colon Q \to \Hom(\g,\lf / \g)$.

We note that $Q$ is, in general, not a principal fiber bundle over the base space $M$, but ``a step wise principal'' bundle.
If $\g$ satisfied  the condition (C) (existence of auxiliary invariant complementary subspaces) then the bundle $Q$ becomes a principal fiber bundle over $M$ and the structure function $\chi$ takes its values only on $\Hom_{+}(\g_-,\lf / \g)$. 

Moreover, it holds in general without condition (C), that the structure function $\chi$ vanishes if and only if its part taking values in cohomology group $H^1_+(\g_-,\lf/\g)$ is identically 0.

If $\g \subset \lf$ is a simple graded Lie algebra and the representation of $\g$ on $\gr V$ is irreducible, then $\g$  satisfies the condition (C), and the related geometry is called extrinsic parabolic geometry.  

Thus the extrinsic geometry in flag varieties may be well understood by the unified principle described as above, on the other hand, each subcategory $\mathcal{EXG}(\g_-,V,\phi;L)$ has its on rich world according to the algebraic nature of its symbol. Similar study of extrinsic parabolic geometries, but in a different context, were initiated in~\cite{landrob,robles}.  

In the simplest case where $(\g,V)$ is $\mathfrak{sl}(2,\R)$ and its irreducible representation of dimension $k$, the corresponding extrinsic geometry is nothing but the geometry of curves in the projective space $P^{k-1}$ and the corresponding differential equations are linear ordinary differential equations for one unknown function of order $k$, which has been well-studied since 19 century~\cite{wilch}. Next example can be found also in the classical works of Wilzynski on the surfaces in $P^3$ of hyperbolic type~\cite{wilch2}, that is those having the non-degenerate second fundamental form of signature $(1,1)$.  This geometry of surface can be interpreted as the extrinsic geometry of osculating maps of the following symbol type:
Take $\g=\mathfrak{so}(2,2)=\mathfrak{sl}(2,\R)\times\mathfrak{sl}(2,\R)$ with the standard representation on $V=\R^4$ and with the standard grading of depth~1, $\g=\g_{-1}\oplus\g_0 \oplus \g_1$.  

In the present paper we consider the subcategory $\mathcal{EXG}(\sigma_{sl3})$, where the symbol $\sigma_{sl3}$ is defined as follows: $\g_p=\bigoplus_{p=-2}^2\mathfrak{sl}(3)_p$ with the grading defined by the Borel subalgebra consisting of the upper triangular matrices.
The representation is given by the adjoint representation. Identifying $(V, \phi)$ with $\gr V$ and shifting degree, we set $V=\bigoplus_{q=0}^{4}V_q$  $V_q=\g_{q-2}$. Therefore, $\dim \g_{-2}=1, \dim \g_{-1}=2, \dim \g_{0}=2, \dim \g_{1}=2, \dim \g_{2}=1$ and $\g_-=\g_{-2}\oplus \g_{-1}$ is the Heisenberg Lie algebra of dimension 3.

We therefore consider an osculating map $\varphi\colon(M,\f) \to \Flag(V,\phi)$ of symbol $(\g_-,V)$ described above, so that $(M,\f)$
is a filtered manifold with the symbol $\g_-$, that is, a contact manifold, and $(V,\phi)$ is the 8-dimensional vector space. (We shall later specify $L\subset GL(V)$). 

Thus this is the first simplest case where the source manifolds a non-trivial filtered manifolds. Note that if $\phi'$ is a subfiltration of $\phi$ then there is a canonical projection $\pi\colon \Flag(V,\phi) \to \Flag (V,\phi')$, in particular  by taking the last 1-dimensional subspace we have the canonical projection $\pi\colon \Flag(V,\phi) \to P(V)$. Composing with this projection, we have an immersion $\bar{\varphi}=\pi\circ \varphi\colon (M,\f) \to P(V)$ the correspondence $\varphi$ to $\bar {\varphi}$ being reciprocal, to study the osculating maps to the flag variety is just to study the immersions from 3 -dimensional contact manifolds to 7-dimensional projective space having osculating series of the above type. Thus this study may be viewed also as a contact generalization of the classical surface theory in projective spaces.

The linear differential equations corresponding an osculating map of the present type has the following form
\begin{equation*} 
\left\{ \begin{aligned}
X^2u &=a_1Xu+b_1Yu+c_1u,\\
Y^2u &=a_2Xu+b_2Yu+c_2u
\end{aligned}
\right.
\end{equation*}
where $X$ and $Y$ are two vector fields on a 3-dimensional contact manifold $(M,\f)$ spanning the contact distribution $\f^{-1}$, and $u\in\C^\infty(M)$.

This is a system of differential equations of weighted order to on a contact manifold and has 8-dimensional solution space 
if the coefficients satisfy all the compatibility conditions that we assume. 

Our general theory tells how to find the invariants of the above osculating maps and differential equations. On the basis of it we classify all the transitive osculating maps and differential equations of the type above with transitive symmetry algebra.

Classification of homogeneous submanifolds in a given homogeneous space $L/L^0$ has a long history going back to the first works of Sophus Lie on the classification of homogeneous curves in $P^2$. We reference Doubrov--Komrakov~\cite{DK} for general classification techniques and numerous works devoted to the classification of homogeneous surfaces in $A^3$ and $P^3$, as well as homogeneous submanifolds in $A^4$ and $P^4$, whose symmetry group has a non-trivial stabilizer.

Thus the present work may be viewed as a contact generalization of those preceding works but the general method developed 
in~\cite{DMM1} under which we work in this paper will also give light on the earlier works.

Surprisingly, similarity between the geometry of surfaces in $P^3$ and the geometry of osculating embeddings of $\mathfrak{sl}(3)$ type is observed in the classification results of submanifolds with transitive symmetry algebra. In particular, in both cases there exists a unique (up to equivalence) object with a symmetry algebra of submaximal dimension. In case of surfaces in $P^3$ this is the ruled Cayley cubic~\cite{cayley}, in our case this is an embedding with 4-dimensional symmetry algebra described in Subsection~\ref{ss_stab}. We call it a \emph{contact Cayley surface} by the analogy. Both have transitive symmetry with 1-dimensional stabilizer at a generic point.     

In the forthcoming paper we will give a general principle how to determine the transitive structures in general extrinsic geometry.
It is reduced to a purely algebraic problems which can be solves by explicit computations theoretically.
We may say that in our case of $\mathfrak{sl}(3)$ type  it is just a good example where we can carry out all the computations by hand with the aid of nice symmetry of the star David (see Section~\ref{sec:letoile}).

Now let us briefly describe the contents of this paper. We first recall the algebraic nature of the geometry that we are going to study since the Killing form of $\mathfrak {sl}(3)$ is left invariant by the adjoint representation we have
\[
	\mathfrak{sl}(3)\subset\mathfrak{so}(5,3)\subset \mathfrak{gl}(8).
\]
We then recompose it into $\fsl(3)$-irreducible components 
\begin{align*}
	\mathfrak {gl}(8)&=\Gamma_{0,0}+2\Gamma_{1,1}+\Gamma_{3,0} +\Gamma_{0,3} +\Gamma_{2,2},\\
	\mathfrak {so}(5,3)&=\Gamma_{1,1}+\Gamma_{3,0} +\Gamma_{0,3},
\end{align*}
where $\Gamma_{1,1}=\fsl(3)$.

By computing after Kostant~\cite{kostant} we have 
\[
	H^1_+(\g_-,\Gamma)=0
\]
for $\Gamma=\Gamma_{0,0}$, $\Gamma_{1,1}$, $\Gamma_{2,2}$ and 
\begin{align*}	
	H^1_+(\g_-,\Gamma_{3,0}) &= H^1_1(\g_-,\Gamma_{3,0})=\langle \xi^R_1 \rangle,\\
	H^1_+(\g_-,\Gamma_{0,3}) &= H^1_1(\g_-,\Gamma_{0,3})=\langle \xi^S_1 \rangle.
\end{align*}

The vanishing of the above first three cohomology groups implies the extrinsic geometry under the transformation group $GL(8)$ reduce to 
that under $SO(5,3)$, so that we study the extrinsic geometry by setting $L=SO(5,3)$.

Our starting  point is the Cartan bundle $Q\to M $ with structure group $G^0$ endowed with an $\mathfrak l$-valued 1-form $\omega$
satisfying 
\[
	d\omega+\tfrac12[\omega\wedge \omega]=0.
\]
According to the direct sum decomposition 
\[
	\lf=\g \oplus \g^{\perp},\ \g^{\perp}=\Gamma_{3,0}+\Gamma_{0,3}.
\]
We can write 
\[
	\omega=\omega_I +\omega_{II},\quad \omega_{II}=\chi\omega_I
\] and 
\[
	\chi=\chi^R+\chi^S,\quad \chi^R=\sum_{i=1}^6\chi^R_i,\quad \chi^S=\sum_{i=1}^6\chi^S_i.
\]
On account of the cohomology group obtained as above we can write 
\[
	\chi^R_1=h^R_1\xi^R_1,\quad \chi^S_1=h^S_1\xi^S_1.
\]  

Since the relevant cohomology group $H^1_r$ vanishes for $r>1$, according to the above theory the structure function $\chi_r$ is uniquely determined from $\chi_1$ inductively for $r>1$. A direct but cumbersome calculation then reveals unexpectedly simple exact formulas for $\chi_r$ as stated in Proposition~\ref{hRS}, which then leads us to much longer calculations to determine all transitive structures in our category.  

Now our main result can be formulated as follows.
\begin{thm}
	Let $\varphi\colon (M,\f)\to L/\phi^0 L\subset \Flag(V,\phi)$ be an osculating embedding of type $\fsl_3$ with a locally transitive symmetry algebra. Then, up to the action of $L$, it corresponds to one of the following systems of PDEs:\\[5mm]
	\begin{tabular}{|c|l|c|}
	\hline
	& \textbf{Equation} & \textbf{Symmetry algebra} \\
	\hline
	(O) & $Z_1^2u=Z_2^2u=0$ & $\sll(3,\R)$ \\
	\hline
	($I_0$) & $Z_1^2u=0,Z_2^2u=6Z_1u$ & 4-dim solvable \\
	\hline 
	($I_1$) & $Z_1^2u=0$, & 3-dim solvable \\
	& $Z_2^2u=6Z_1u+2P_2Z_2u-\big(\tfrac{24P_2^2}{25}\pm 1\big)u$ & \\[2mm]
	\hline 
	($I_2$) & $Z_1(Z_1\pm 2)u=0$, & 3-dim solvable \\
	& $Z_2^2u=6Z_1u\pm 9u$ & \\
	\hline
	($II_0$) & $Z_1^2u=-6Z_2u,Z_2^2u=6Z_1u$ & $\sll(2,\R)$ \\
	\hline 
	($II_1$) & $(Z_1-P_1)^2u=-6(Z_2-P_2)u+(P_1^2+3P_2)u$, & 3-dim solvable \\
	     & $(Z_2-P_2)^2u=6(Z_1-P_1)u+(P_2^2-3P_1)u$, & \\[2mm]
	     & $P_1P_2=-9$ & \\
	\hline
	($II_2$) & $(Z_1-P_1)^2u=-6(Z_2-P_2)u+(\tfrac{1}{4}P_1^2+3P_2)u$, & 3-dim solvable \\
	& $(Z_2-P_2)^2u=6(Z_1-P_1)u+(\tfrac{1}{4}P_2^2-3P_1)u$, & \\[3mm]
	& $P_1P_2=-144$ & \\
	\hline
	\end{tabular}
	
	\smallskip\noindent
	Here $Z_1,Z_2$ are certain left-invariant vector fields on a 3-dimensional real Lie group spanning a contact distribution, $u$ is an unknown smooth function on this group and $P_1,P_2$ are real constants. 
\end{thm}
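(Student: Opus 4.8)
The plan is to reduce the classification to a finite, purely algebraic problem via the extrinsic Cartan connection $(\cG\to M,\omega)$ already constructed in~\cite{DMM1}, together with the explicit formulas for the structure function $\chi=\chi^R+\chi^S$ provided by Proposition~\ref{hRS}. An osculating embedding with locally transitive symmetry algebra is, up to equivalence, determined by the data of $M=G/H$ with $G$ acting transitively and by the $G$-invariant Cartan connection; since $\g_-$ is the 3-dimensional Heisenberg algebra and the symmetry is transitive on $M$, the base is (locally) a 3-dimensional Lie group and $\omega$ is left-invariant, so everything is encoded by a constant map $\chi\colon\cG\to\Hom(\g,\lf/\g)$ — equivalently, by constant values of the fundamental invariants $h^R_1,h^S_1$ (and their normalized partners) compatible with the structure equation $d\omega+\tfrac12[\omega,\omega]=0$. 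First I would translate transitivity into the statement that the lowest-order invariants $h^R_1,h^S_1$, viewed as functions on $\cG$, are constant along the whole bundle, and then record how $G^0$ (the structure group) acts on the pair $(h^R_1,h^S_1)\in H^1_1(\g_-,\Gamma_{3,0})\oplus H^1_1(\g_-,\Gamma_{0,3})$.

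Next I would organize the classification by the $G^0$-orbit of $(h^R_1,h^S_1)$, which is where the ``star of David'' symmetry of Section~\ref{sec:letoile} enters: the outer symmetry exchanging $\Gamma_{3,0}\leftrightarrow\Gamma_{0,3}$ (transpose-inverse on $\fsl(3)$) acts on these two cohomology lines, and the weights cut the parameter space into the three strata visible in the theorem — (O) both invariants zero; type $I$ exactly one nonzero (say $h^S_1=0$, $h^R_1\neq0$); type $II$ both nonzero. In each stratum I normalize the nonzero invariant(s) using the $G^0$-action (rescaling by the semisimple part, killing lower-order normalizable components by the unipotent part), arriving at finitely many normal forms parametrized by the residual constants $P_1,P_2$. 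The higher structure functions $\chi_r$, $r>1$, are then not free: by the vanishing $H^1_r=0$ for $r>1$ they are determined from $\chi_1$ by the inductive formulas of Proposition~\ref{hRS}, so imposing $d\omega+\tfrac12[\omega,\omega]=0$ becomes a finite system of polynomial equations in $P_1,P_2$ — these are exactly the constraints $P_1P_2=-9$, $P_1P_2=-144$, $\tfrac{24P_2^2}{25}\pm1$, etc., appearing in the table.

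Once the constant Cartan connections are pinned down, I would reconstruct the corresponding differential equations. By the categorical equivalence between osculating maps into flag varieties and weighted involutive systems of linear PDE recalled in the introduction (see~\cite{Mor2002,DMM1}), each normal form of $\omega$ yields a second-order system $Z_1^2u=\ldots$, $Z_2^2u=\ldots$ on the contact Lie group, where $Z_1,Z_2$ are the left-invariant fields dual to the $\g_{-1}$-part of $\omega$ and the coefficients are read off from the $\g_0,\g_1,\g_2$-components of $\omega$ (the $a_i,b_i,c_i$ of the general form displayed above, now constants depending on $P_1,P_2$). I would present this reconstruction concretely for each of the seven cases, and conversely verify by direct substitution that each listed system indeed has an 8-dimensional solution space and the claimed symmetry algebra (identifying the algebra — $\fsl(3,\R)$, $\fsl(2,\R)$, or the various solvable ones — from the isotropy of the corresponding orbit in $L/\phi^0L$).

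The main obstacle I expect is the middle step: controlling the $G^0$-normalization together with the integrability constraints simultaneously. Getting the normal forms right requires knowing precisely which lower-order components of $\chi$ can be absorbed by the prolongation steps $G_1,G_2,\dots$ and which survive as genuine moduli, and then the structure equation must be expanded in full using the exact expressions of Proposition~\ref{hRS} — this is the ``much longer calculation'' alluded to in the introduction. The saving grace is that the $\fsl(3)$-module decomposition of $\lf=\Gamma_{1,1}+\Gamma_{3,0}+\Gamma_{0,3}$ is small and the star-of-David symmetry halves the work by pairing the $R$- and $S$-computations; so while tedious, the bracket computations are finite-dimensional and can be done by hand, and consistency of the final list is checked independently by exhibiting the symmetry algebras and solution spaces of the resulting PDE systems.
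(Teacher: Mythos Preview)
Your overall strategy---reduce to constant data via the Cartan bundle, stratify by which of $h_1^R,h_1^S$ vanish, normalize by $G^0$, and solve the structure equation---matches the paper's. But two concrete ingredients are missing from your setup and without them the argument does not close.

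First, the structure function $\chi\colon\g_-\to\g^\perp$ does not encode the full transitive connection. In the paper one fixes a point $\mathring z$ in the symmetry-group orbit and introduces \emph{three} unknowns: $\chi\colon\g_-\to\g^\perp$ (the $\omega_{II}$ part), $\psi\colon\g_-\to\g$ with $\psi_0=\id$ and $\partial^*\psi=0$ (the $\omega_I$ part), and $\gamma\in\Hom(\wedge^2\g_-,\g_-)$ (the new Lie bracket on the symmetry algebra). The parameters $P_1,P_2,Q_{ij}$ you invoke are coefficients of $\gamma$, not of $\chi$; the constraints $P_1P_2=-9$, $P_1P_2=-144$ arise only when the $\psi$-equations and $\chi$-equations are solved \emph{together} degree by degree (the $\psi$-equation in degree~4 is what forces $(C-15)(C-60)=0$). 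Proposition~\ref{hRS} tells you $\chi_r$ in terms of the derivatives $D_{e_i}^k h_1^{R,S}$, but in the transitive setting these derivatives are themselves unknown constants coupled to $\psi$ and $\gamma$; you cannot read them off from $h_1^{R},h_1^S$ alone.

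Second, you have not separated the cases by stabilizer dimension. The paper first shows that if both $h_1^R,h_1^S\ne0$ the isotropy is forced to be trivial, while if exactly one is nonzero the isotropy is at most one-dimensional; the case $\dim\Hf=4$ is then handled separately (yielding the contact Cayley surface $(I_0)$), and only afterwards does one run the simply-transitive degree-by-degree analysis. Relatedly, in the type~$I$ simply-transitive branch one must check a posteriori which three-dimensional algebras $\Hf$ admit an enlargement to $\dim 4$ inside $\fso(5,3)$ (the ``special values of parameters'' step), and discard those---this is what produces the normalization $Q_{12}+\tfrac{P_2^2}{25}=\pm1$ in $(I_1)$ and eliminates one of the two $Q_{21}$-branches in $(I_2)$. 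Without this check your list would contain redundancies equivalent to $(I_0)$.
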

More details on the symmetry algebras and their realization as subalgebras in $\fso(5,3)$ is summarized in Section~\ref{s:summary}. The proof of this theorem will be given in Sections~4, 5, 6 of this paper.

Part of these computations was done using Maple software. The first half of the classification (case (II), computations of Section~\ref{sec:nonvanishing}) was done initially without the use of computer software and then verified in Maple. The second half of the classification (case (I), computations of Section~\ref{s:vanishing})) was mostly done in Maple. The corresponding Maple worksheets can be found at \url{https://arxiv.org/abs/2308.06169}.   

\medskip
\textbf{Acknowledgments}. We would like to thank Yoshinori Machida and Dennis The for stimulating discussions on the topic of the paper.

\section{Extrinsic geometry of $\fsl_3$-type}

Let $\g=\fsl(3,\R)$ be endowed with a grading $\g=\oplus_{p\in\Z}\g_p$ determined by the Borel subalgebra of $\fsl(3,\R)$, also known as \emph{contact grading}. Let $V=\oplus V_q$ be the graded $\g$-module given by $V_q=\g_{q-2}$, the representation of $\g$ on $V$ being the adjoint representation. Let $\kappa$ be the Killing form of $V$ ($=\fsl(3,\R)$).  Since $\ad X$ ($X\in\g$) preserves $\kappa$, we have an embedding
\[
\ad\colon \g \to \of(V,\kappa)\subset \gl(V).
\]
Note that here $\of(V,\kappa)=\of(3,5)$. Note also that the grading of $V$ induces that of $\gl(V)$ and then that of $\of(V,\kappa)$, so that we have
\begin{align*}
	&\g_p\to\of(V,\kappa)_p\subset \gl(V)_p,\\
	& \of(V,\kappa)=\oplus \of(V,\kappa)_p,
	& \gl(V)=\oplus \gl(V)_p.
\end{align*}

We denote by $\phi$ the filtrations induced by these gradings:
\[
	\phi^p\g = \oplus_{i\ge p}\g_i,\quad \phi^p V=\oplus_{i\ge p} V_p,
\]
and so on. We denote by $G$ the group $SL(3,\R)$ and by $L$ the group $O(V,\kappa)$ and $\lf$ its Lie algebra. 

We study the extrinsic geometries in $\Flag(V,\phi)$. In particular, we consider now extrinsic geometries in $L/\phi^0L\subset \Flag(V,\phi)$.

From~\cite{DMM1} we have
\begin{thmm}\label{thm1} To each osculating map 
	\[
	 \varphi\colon (M,\f)\to L/\phi^0L
	\]
of type $(\g_{-},V; L)$ there canonically corresponds the pair $(P,\omega)$, where 
\begin{enumerate}
	\item $P$ is a principal frame bundle over $M$ with the structure group $G^0=\phi^0 G$;
	\item $\omega$ is an $\lf$-valued 1-form satisfying
	\begin{enumerate}
		\item[i)]  $\langle \tilde A, \omega\rangle=A$, $A\in \g^0$;
		\item[ii)] $R_a^*\omega=\Ad (a^{-1})\omega$, $a\in G^0$; 
		\item[iii)] $L_{\tilde A}\omega = -\ad (A)\omega$, $A\in\g^0$;
		\item[(iv)] $d\omega+\frac{1}{2}[\omega,\omega]=0$;
		\item[v)] if we decompose $\omega$ as
		\[
			\omega=\omega_{I}+\omega_{II}
		\]
		according to the direct sum decomposition
		\[
			\lf = \g\oplus \g^{\perp},
		\]
		then $\omega_{I}\colon T_zP\to \g$ is a linear isomorphism for any $z\in P$;
		\item[vi)] if we write $\omega_{II}=\chi \omega_{I}$, then $\chi$ is a $\Hom(\g_{-},\g^{\perp})$-valued function on $P$ and
		\[
			\partial^*\chi = 0.
		\]
		Moreover, $\chi_j=0$ for $j\le 0$, where 
		\[
			\chi = \sum_j \chi_j,\quad \chi_j\colon P\to \Hom(\g_{-},\g^{\perp}_j).
		\]
	\end{enumerate}
\end{enumerate}
\end{thmm}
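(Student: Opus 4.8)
The plan is to specialize the general construction of~\cite{DMM1} to the symbol $\sigma_{sl3}=(\g_-,V,\phi;L)$ with $\g=\fsl(3,\R)$ and $V=\g$ the adjoint module. The decisive simplification in \emph{extrinsic} geometry is that the structure equation (iv), $d\omega+\tfrac12[\omega,\omega]=0$, is not something to be arranged but is inherited from the ambient homogeneous space: since $\varphi$ maps into $L/\phi^0L$, one pulls back the principal bundle $L\to L/\phi^0L$ to a $\phi^0L$-bundle $\widehat P\to M$ and pulls back the Maurer--Cartan form of $L$ to an $\lf$-valued $1$-form $\widehat\omega$; the Maurer--Cartan equation then immediately gives flatness, $\Ad$-equivariance, and reproductivity of the fundamental vector fields. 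Every $1$-form below will be a restriction, to a reduction of structure group, of this ambient form, so properties (i)--(iv) will hold at every stage; only (v) and (vi) require real work.

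The construction of $P$ proceeds by the step-wise prolongation of~\cite{DMM1}. Because $\varphi$ is osculating of constant symbol $(\g_-,V)$, at each $x$ the graded pair $(\gr\f_x,\gr\varphi_x)$ is isomorphic to the model $(\g_-,\gr V)$, and this produces an order-zero bundle $P^{(0)}\to M$ of frames adapted simultaneously to the filtration $\f$ of $M$ and to the osculating filtration, a principal bundle with structure group $G_0$, $\operatorname{Lie}G_0=\g_0$, on which $\widehat\omega$ restricts to a form whose $\g_{\le 0}$-part is a tautological partial coframe. One then prolongs inductively: given $P^{(i)}$ with its form, decompose it along $\lf=\g\oplus\g^\perp$ as $\omega_I^{(i)}+\omega_{II}^{(i)}$, write $\omega_{II}^{(i)}=\chi^{(i)}\omega_I^{(i)}$ with $\chi^{(i)}$ taking values in $\Hom(\g_-,\g^\perp)=C^1(\g_-,\g^\perp)$, and note that on the degree-$(i+1)$ component the structure group acts affinely with linear part translation by $\im\partial$; the Kostant-type Hodge decomposition $C^\bullet(\g_-,\lf/\g)=\im\partial\oplus\Ker\partial^*$ then singles out the unique subbundle $P^{(i+1)}\subset P^{(i)}$, with structure group $G_{i+1}$, $\operatorname{Lie}G_{i+1}=\g_{i+1}$, on which $\partial^*\chi_{i+1}=0$.

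It remains to see that the tower terminates and assembles into a genuine principal bundle. Since $\g=\fsl(3,\R)$ is simple and $V=\g$ is irreducible over $\g$, condition (C) of~\cite{DMM1} holds; hence after finitely many steps the tower stabilizes at $Q=P$, which is a principal bundle over $M$ with structure group $G^0=\phi^0G$. The form $\omega$ obtained by restricting $\widehat\omega$ then has $\omega_I\colon T_zP\to\g$ a linear isomorphism --- this is (v) --- while $\chi$ lies in $\Hom_+(\g_-,\g^\perp)$ with $\partial^*\chi=0$ and $\chi_j=0$ for $j\le 0$ --- this is (vi). Canonicity of the correspondence $\varphi\mapsto(P,\omega)$ follows because each reduction was \emph{forced} by the condition $\partial^*=0$, so no freedom is ever introduced; consequently an equivalence of osculating maps lifts uniquely to an isomorphism of the associated pairs.

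The main obstacle is the prolongation step itself: verifying that the structure-group action on the structure function is affine with linear part exactly $\partial$ on the relevant graded piece, so that imposing $\partial^*=0$ determines a \emph{unique} reduction at each degree. This rests on the algebraic input in the style of Kostant~\cite{kostant} --- the Hodge-type decomposition of $C^\bullet(\g_-,\lf/\g)$ together with the verification of condition (C), i.e.\ the existence of $G^0$-invariant complements --- which is precisely where simplicity of $\fsl(3)$ and irreducibility of the adjoint module enter. By contrast, all the differential-geometric content (flatness, equivariance, reproductivity) is automatic once the ambient Maurer--Cartan form is in hand.
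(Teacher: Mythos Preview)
The paper does not prove Theorem~A itself; it simply cites the result from~\cite{DMM1}. Your outline is a correct sketch of that construction and matches the summary given in the paper's own introduction: pull back the Maurer--Cartan form from $L$ (whence (i)--(iv) are automatic), reduce step-wise using the Kostant-type normalization $\partial^*\chi=0$, and invoke condition~(C) --- valid here because $\g=\fsl(3)$ is simple and $V=\g$ is irreducible --- to conclude that the tower collapses to a genuine principal $G^0$-bundle with the structure function $\chi$ in $\Hom_+(\g_-,\g^\perp)$.
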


\section{L'\'etoile de Davide}
\label{sec:letoile}

We have an irreducible decomposition of $\gl(V)=\gl(8,\R)$ as $\fsl(3,\R)$-module:
\[
\gl(8,\R)=\Gamma_{0,0}\oplus 2\Gamma_{1,1}\oplus \Gamma_{3,0}\oplus\Gamma_{0,3}\oplus\Gamma_{2,2},
\]
and an irreducible decomposition of $\of(V,\kappa)=\of(5,3)$:
\[
\of(5,3)=\Gamma_{1,1}\oplus \Gamma_{3,0}\oplus\Gamma_{0,3},
\]
where $\Gamma_{a,b}$ denotes the irreducible module of highest weight $a\lambda_1+b\lambda_2$, $\{\lambda_1, \lambda_2\}$ being the fundamental system of weights. Note that $\Gamma_{1,1}\equiv \fsl(3,\R)$. Thus,
\[
\g^\perp = \Gamma_{3,0}\oplus\Gamma_{0,3}.
\] 
Note also that both $\Gamma_{3,0}$ and $\Gamma_{0,3}$ satisfy the bracket relations:
\begin{align*}
	&[\Gamma_{3,0}, \Gamma_{3,0}]\subset \Gamma_{0,3},\\
	&[\Gamma_{0,3}, \Gamma_{0,3}]\subset \Gamma_{3,0},\\
	&[\Gamma_{3,0},\Gamma_{0,3}]\subset \Gamma_{1,1} = \fsl(3,\R).
\end{align*}
 This directly follows from the decomposition of $\fsl(3,\R)$-modules:
\begin{align*}
	\Gamma_{3,0}\otimes \Gamma_{0,3} &= \Gamma_{3,3} \oplus \Gamma_{2,2} \oplus\Gamma_{1,1}\oplus \Gamma_{0,0},\\
    \wedge^2 \Gamma_{3,0} &= \Gamma_{4,1}\oplus \Gamma_{0,3}
\end{align*}
and similarly for $\wedge^2 \Gamma(0,3)$.

Let us fix the notation for the representations of $\fsl(3,\R)$. We take the basis $\{A_1,A_2,\dots,A_8\}$ of $\fsl(3,\R)$ as:
\begin{equation}\label{eq:basis}
\begin{aligned}
A_1 &= E_{13}, A_2=E_{12}, A_3=E_{23},\\
A_4 &=H_1=E_{11}-E_{22}, A_5=H_2=E_{22}-E_{33},\\
A_6 &= E_{32}, A_7=E_{21}, A_8=E_{31},
\end{aligned}
\end{equation}
where $E_{ij}$ denotes the $(i,j)$-matrix element in $\gl(3,\R)$. For $X\in\fsl(3,\R)$, elements $X$, $\ad X$ and their matrix representations with respect to the above basis are all identified and denoted simply by $X$. Thus, for example,
\[
 E_{13} = \ad E_{13} = \begin{pmatrix} 
 0 & 0 & 0 & -1 & -1 & 0 & 0 & 0 \\
 0 & 0 & 0 & 0  & 0  & 1 & 0 & 0 \\
 0 & 0 & 0 & 0  & 0  & 0 & -1 & 0 \\
 0 & 0 & 0 & 0  & 0  & 0 & 0 & 1 \\
 0 & 0 & 0 & 0  & 0  & 0 & 0 & 1 \\
 0 & 0 & 0 & 0  & 0  & 0 & 0 & 0 \\ 
  0 & 0 & 0 & 0  & 0  & 0 & 0 & 0 \\ 
   0 & 0 & 0 & 0  & 0  & 0 & 0 & 0
  \end{pmatrix}. 
\] 
We shall often write $e_0, e_1, e_2$ and $\check{e}_0, \check{e}_1, \check{e}_2$ for the elements $A_8, A_7, A_6$ and $A_1, A_2, A_3$ respectively. The dual basis of $\{A_1,A_2,\dots, A_8\}$ is denoted $\{A_1^*, A_2^*, \dots, A_8^*\}$.

The weight diagram of the $\fsl(3,\R)$-module $\lf=\of(5,3)=\Gamma_{1,1}\oplus \Gamma_{3,0}\oplus \Gamma_{0,3}$ forms l'\'etoile de Davide:
\begin{center}
\begin{tikzpicture}
	\draw (-3, -1.732) -- (0, 3.464) -- (3, -1.732) -- (-3, -1.732);
	\draw (0, -3.464) -- (-3, 1.732) -- (3, 1.732) -- (0, -3.464);
	\draw [thick, ->] (0,0) -- (2,0);
	\draw [thick, ->] (0,0) -- (-1, 1.732);
	\node[above] at (0, 3.464) {$\Gamma_{0,3}$};
	\node[below right] at (3, 1.732) {$\Gamma_{3,0}$};
	\node[above] at (1,0) {$\alpha_1$};
	\node[left] at (-0.5, 0.866) {$\alpha_2$};
\end{tikzpicture}	
\end{center}

Note that the central hexagon forms the root diagram of $\g$. For later use we fix weight vectors for $\Gamma_{3,0}$ and $\Gamma_{0,3}$. Put
\[
 R_{2,1} = A_1\otimes A_7^*-A_2\otimes A_8^*
\]
and then
\begin{align*}
R_{1,1} &= [e_1, R_{2,1}], &
R_{1,0} &= [e_2, R_{1,1}], &
R_{0,1} &= [e_1, R_{1,1}],\\
R_{0,0} &= [e_2, R_{0,1}], &
R_{0,-1} &= [e_2, R_{0,0}],&
R_{-1,1} &= [e_1, R_{0,1}],\\
R_{-1,0} &= [e_2, R_{-1,1}],&
R_{-1,-1} &= [e_2, R_{-1,0}],&
R_{-1,-2} &= [e_2, R_{-1,-1}].
\end{align*}

\begin{center}
\begin{tikzpicture}
	\draw (0, -3.464) -- (-3, 1.732) -- (3, 1.732) -- (0, -3.464);
	\draw[fill] (0,0) circle [radius=0.05];
	\draw[fill] (2,0) circle [radius=0.05];
	\draw[fill] (1,-1.732) circle [radius=0.05];
	\draw[fill] (-2,0) circle [radius=0.05];
	\draw[fill] (-1,-1.732) circle [radius=0.05];
	\draw[fill] (1,1.732) circle [radius=0.05];
	\draw[fill] (-1,1.732) circle [radius=0.05];
	\draw[fill] (0, -3.464) circle [radius=0.05];
	\draw[fill] (-3, 1.732) circle [radius=0.05];
	\draw[fill] (3, 1.732) circle [radius=0.05];
	\node[below] at (0, -3.6) {$R_{-1,-2}$};
	\node[below right] at (1, -1.732) {$R_{0,-1}$};
	\node[below right] at (2, 0) {$R_{1,0}$};
	\node[above right] at (3, 1.732) {$R_{2,1}$};
	\node[above] at (1, 1.732) {$R_{1,1}$};
	\node[above] at (-1, 1.732) {$R_{0,1}$};
	\node[above left] at (-3, 1.732) {$R_{-1,1}$};
	\node[below left] at (-2, 0) {$R_{-1,0}$};
	\node[below left] at (-1, -1.732) {$R_{-1,-1}$};
	\node[above] at (0, 0) {$R_{0,0}$};
\end{tikzpicture}
\end{center}

Similarly for $\Gamma_{0,3}$ we put
\[
 S_{1,2} = A_1\otimes A_6^*-A_3\otimes A_8^*
\]
and 
\begin{align*}
	S_{1,1} &= [e_2, S_{1,2}], &
	S_{0,1} &= [e_1, S_{1,1}], &
	S_{1,0} &= [e_2, S_{1,1}],\\
	S_{0,0} &= [e_1, S_{1,0}], &
	S_{-1,0} &= [e_1, S_{0,0}],&
	S_{1,-1} &= [e_2, S_{1,0}],\\
	S_{0,-1} &= [e_1, S_{1,-1}],&
	S_{-1,-1} &= [e_1, S_{0,-1}],&
	S_{-2,-1} &= [e_1, S_{-1,-1}].
\end{align*}
\begin{center}
\begin{tikzpicture}
	\draw (-3, -1.732) -- (0, 3.464) -- (3, -1.732) -- (-3, -1.732);
	\draw (0, -3.464) -- (-3, 1.732) -- (3, 1.732) -- (0, -3.464);
	\draw[fill] (0,0) circle [radius=0.05];
	\draw[fill] (2,0) circle [radius=0.05];
	\draw[fill] (1,-1.732) circle [radius=0.05];
	\draw[fill] (-2,0) circle [radius=0.05];
	\draw[fill] (-1,-1.732) circle [radius=0.05];
	\draw[fill] (1,1.732) circle [radius=0.05];
	\draw[fill] (-1,1.732) circle [radius=0.05];
	\draw[fill] (0, 3.464) circle [radius=0.05];
	\draw[fill] (-3, -1.732) circle [radius=0.05];
	\draw[fill] (3, -1.732) circle [radius=0.05];
	\node[above] at (0, 3.6) {$S_{1,2}$};
	\node[below right] at (1, -1.732) {$S_{0,-1}$};
	\node[right] at (2.1, 0) {$S_{1,0}$};
	\node[below left] at (-3, -1.732) {$S_{-2,-1}$};
	\node[above right] at (1, 1.732) {$S_{1,1}$};
	\node[above left] at (-1, 1.732) {$S_{0,1}$};
	\node[below right] at (3, -1.732) {$S_{1,-1}$};
	\node[left] at (-2.1, 0) {$S_{-1,0}$};
	\node[below left] at (-1, -1.732) {$S_{-1,-1}$};
	\node[above] at (0, 0) {$S_{0,0}$};
\end{tikzpicture}	
\end{center}

\section{The structure function $\chi$}
\subsection{Structure equations}
From 
\[
d\omega + \frac12[\omega,\omega]=0
\]
and 
\[
 \omega = \omega_{I}+\omega_{II},\quad \omega_{II} = \chi\omega_{I}
\]
and from normality condition we have
\begin{align*}
 \partial\chi & = -\cD \wedge \chi - \tfrac12[\chi\wedge \chi]_{II}+\tfrac12\chi([\chi\wedge\chi]_{-}),\\
 \partial^*\chi &= 0, 
\end{align*}
where $\partial$ denotes the cohomology codifferential 
\[
	\partial\colon C^k(\g_{-},\g^{\perp})\to C^{k+1}(\g_{-},\g^{\perp}),
\]
$\partial^*$ denotes its dual 
\[
\partial^*\colon C^{k+1}(\g_{-},\g^{\perp})\to C^k(\g_{-},\g^{\perp}),
\]
and
\begin{align*}
	(\cD\wedge \chi)(u,v) &= L_{\tilde u}(\chi)(v)- L_{\tilde v}(\chi)(u),\\
	[\Phi\wedge \Psi](u,v) &= [\Phi(u),\Psi(v)]-[\Phi(v),\Psi(u)],
\end{align*}
for any $u,v\in\g_{-}$.

At each degree $p\ge 1$ we have
\begin{equation}\label{chi_p}
\begin{aligned}
\partial\chi_p &=- \sum_{i=1,2}\cD_i\wedge \chi_{p-i}-\frac12 \sum_{\substack{i+j=p \\ i,j\ge 1}}[\chi_i,\chi_j] + \frac12\sum_{\substack{i+j+k=p \\ i,j,k\ge 1}}\chi_i([\chi_j,\chi_k]_{-}),
\\
\partial^*\chi_p &=0,
\end{aligned}
\end{equation}
which determines $\chi_p$ inductively, and uniquely up to the cohomology group $H^1_{+}(\g_{-},\g^{\perp})$. 

In our case we can determine $\chi_p$ completely as follows: 

Computing cohomology via Kostant theorem~\cite{kostant}, we have:
\begin{prop}
	\[
		H_{+}^1(\g_{-},\g^{\perp})= \langle \xi_1^R\rangle \oplus \langle \xi_1^S\rangle,
	\]
	where 
	\[
	\begin{cases*}
	\xi_1^R = R_{-1,1}\otimes e_2^*=-6(A_3\otimes A_2^* - A_7\otimes A_6^*)\otimes e_2^*,\\
	\xi_1^S = S_{1,-1}\otimes e_1^*=6(A_2\otimes A_3^*-A_6\otimes A_7^*)\otimes e_1^*.
	\end{cases*}
	\]
\end{prop}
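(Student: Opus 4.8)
The plan is to compute the Lie algebra cohomology $H^1(\g_-,\g^\perp)$ directly via Kostant's theorem~\cite{kostant}, exploiting the fact that $\g_- = \g_{-2}\oplus\g_{-1}$ is the nilradical of the Borel subalgebra $\pf = \g_{\ge 0}$ of $\g = \fsl(3,\R)$ associated with the contact grading, so that $\pf$ is a (minimal) parabolic and the contact grading is the $|2|$-grading determined by the full set of simple roots. First I would recall that for a $\g$-module $W$ decomposed into irreducibles, $H^*(\g_-,W)$ decomposes accordingly, so it suffices to treat $W = \Gamma_{3,0}$, $W = \Gamma_{0,3}$, $W = \Gamma_{1,1} = \fsl(3,\R)$, and the trivial summands separately; here $\g^\perp = \Gamma_{3,0}\oplus\Gamma_{0,3}$, so only the first two matter for the statement, but I would note in passing that the $\Gamma_{1,1}$, $\Gamma_{0,0}$, $\Gamma_{2,2}$ pieces contribute nothing to $H^1_+$, as already asserted in the introduction. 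By Kostant's theorem, $H^1(\g_-,\Gamma_\mu)$ is the direct sum, over Weyl group elements $w$ of length $1$ (i.e. the two simple reflections $s_{\alpha_1}, s_{\alpha_2}$), of the one-dimensional $\g_0$-modules of extremal weight $w\cdot\mu = w(\mu+\rho)-\rho$, and the grading degree of each such class is determined by the height of the corresponding positive root $\alpha_i$ relative to the grading element. For the contact grading each simple root has grading degree $1$, so $H^1$ is automatically concentrated in degree $1$, which already gives $H^1_+ = H^1_1$.

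Next I would carry out the weight bookkeeping. For $\Gamma_{3,0}$ the highest weight is $\mu = 3\lambda_1$; applying $s_{\alpha_1}\cdot$ and $s_{\alpha_2}\cdot$ to $\mu$ and checking $\g_0$-dominance of the result pins down which (if any) of the two length-one Weyl elements actually yields a cohomology class — for the contact parabolic, $\g_0$ is the Cartan plus the two root spaces of degree zero, i.e. essentially a torus, so every weight line is a $\g_0$-submodule and the only constraint is the length/degree matching, which Kostant handles automatically. The upshot is a single one-dimensional class for $\Gamma_{3,0}$ sitting in degree $1$, and likewise a single class for $\Gamma_{0,3}$; by the symmetry $\Gamma_{a,b}\leftrightarrow\Gamma_{b,a}$ (the outer automorphism of $\fsl(3)$, which on the star of David is the reflection exchanging the two triangles, cf. Section~\ref{sec:letoile}) the $\Gamma_{0,3}$ computation is the mirror image of the $\Gamma_{3,0}$ one. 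Then I would identify the explicit cocycle representatives: the class in $H^1_1(\g_-,\Gamma_{3,0})$ is spanned by a map $e_2^*\mapsto$ (extremal weight vector of $\Gamma_{3,0}$ in the appropriate weight space), and comparing weights with the weight-vector labels $R_{i,j}$ introduced in Section~\ref{sec:letoile} shows this vector is $R_{-1,1}$, up to scale. Fixing the normalization against the chosen weight vectors $R_{2,1} = A_1\otimes A_7^* - A_2\otimes A_8^*$ and iterating the bracket formulas $R_{1,1}=[e_1,R_{2,1}]$, $R_{0,1}=[e_1,R_{1,1}]$, $R_{-1,1}=[e_1,R_{0,1}]$ gives the stated expression $\xi_1^R = R_{-1,1}\otimes e_2^* = -6(A_3\otimes A_2^* - A_7\otimes A_6^*)\otimes e_2^*$, and symmetrically $\xi_1^S = S_{1,-1}\otimes e_1^* = 6(A_2\otimes A_3^* - A_6\otimes A_7^*)\otimes e_1^*$.

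The one genuine verification, as opposed to bookkeeping, is twofold: first, that these weight vectors really are $\partial$-closed and not $\partial$-exact, which is immediate from Kostant (closedness and harmonicity coincide with being an extremal weight vector for the relevant $w\cdot\mu$, and in degree $1$ there is no room for exactness since $C^0(\g_-,\Gamma_{3,0})_1 = (\Gamma_{3,0})_1$ maps onto a complement, so one must check the weight $w\cdot\mu$ does not occur as $\partial$ of a $0$-cochain — again automatic from the Kostant description); second, the explicit evaluation of the iterated brackets in the adjoint representation to get the numerical coefficients $-6$ and $6$ and the precise tensors. I expect the main obstacle to be purely organizational: keeping the weight conventions, the degree-shift $V_q = \g_{q-2}$, and the identification $\g^\perp\subset\gl(V)$ all consistent, and correctly translating Kostant's ``$w\cdot\mu$ dominant for $\g_0$'' condition into the contact-grading setting where $\g_0$ is non-semisimple — but since $\g_0$ here is abelian (torus plus nothing, as the degree-zero part of $\fsl(3)$ in the contact grading is two-dimensional abelian) the dominance condition degenerates and the count reduces to Kostant's length formula. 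So the proof is essentially a careful application of Kostant's theorem followed by a short explicit computation in the $8$-dimensional adjoint module to normalize the representatives; I would present the cohomology count first and relegate the bracket computation of $\xi_1^R,\xi_1^S$ to a direct verification.
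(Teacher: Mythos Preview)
Your approach matches the paper's: both invoke Kostant's theorem (the paper gives no detail beyond the citation, so your expanded outline is in the same spirit). However, there is a genuine gap in your degree argument.

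You write that ``the grading degree of each such class is determined by the height of the corresponding positive root $\alpha_i$ relative to the grading element,'' and conclude that since both simple roots have height~$1$, all of $H^1$ sits in homogeneity~$1$. This reasoning is incorrect: the homogeneity of a class in $H^1(\g_-,W)$ is the degree of the harmonic cocycle as an element of $\Hom(\g_-,W)$, which is the height of the root \emph{plus the degree of the relevant weight vector in $W$}. For the adjoint module $W=\g$ the latter contribution behaves specially, but for a general module such as $\Gamma_{3,0}$ it does not vanish. Concretely, Kostant's theorem gives $\dim H^1(\g_-,\Gamma_{3,0})=2$ (one class for each simple reflection, since for the Borel parabolic $W^{\pf}$ is the full Weyl group and $\g_0$ is the Cartan so every class is one-dimensional). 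Your outline asserts ``a single one-dimensional class for $\Gamma_{3,0}$,'' which already contradicts Kostant. What actually happens is that the two classes lie in \emph{different} homogeneities, and only one of them---the one represented by $\xi_1^R=R_{-1,1}\otimes e_2^*$---lies in positive degree (namely degree~$1$, since $R_{-1,1}\in(\Gamma_{3,0})_0$ and $e_2\in\g_{-1}$). The other class sits in non-positive homogeneity and is discarded when passing to $H^1_+$.

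So the missing step is: compute the homogeneity of \emph{both} length-one Kostant classes for $\Gamma_{3,0}$ (and symmetrically for $\Gamma_{0,3}$), and verify that exactly one of each pair survives in $H^1_+$. Once you fill this in, the rest of your plan---identifying the surviving class with $R_{-1,1}\otimes e_2^*$ by weight matching and then computing the explicit matrix form by iterating the brackets $R_{-1,1}=[e_1,[e_1,[e_1,R_{2,1}]]]$---is correct and straightforward.
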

Thus, we can write 
\[
	\chi=\chi^R+\chi^S,\quad \chi^R=\sum_{i\ge 1}\chi^R_i,\quad \chi^S=\sum_{j\ge 1}\chi^S_j,
\]
and 
\[
	\chi^R_1 = h_1^R\xi_1^R,\quad \chi^S_1 = h_1^S\xi_1^S,
\]
where $h_1^R$ and $h_1^S$ are some functions on $P$.

Playing with l'\'etoile de Davide and carrying detailed computations, we can explicitly solve equations~\eqref{chi_p} for $\chi$.
\begin{prop}\label{hRS} The function $\chi$ has the form:
\begin{align*}
	\chi_1^R &= h_1^R R_{-1,1}\otimes e_2^* &&= h_1^R \xi_1^R,\\
	\chi_2^R &= D_{e_1}h_1^R \big(-\tfrac34 R_{0,1}\otimes e_2^* -\tfrac14 R_{-1,1}\otimes e_0^*\big) && = -\tfrac14 D_{e_1}h_1^R \rho(\check e_1) \xi_1^R, \\
	\chi_3^R &= D^2_{e_1} h_1^R \big(\tfrac12 R_{1,1}\otimes e_2^* + \tfrac14 R_{0,1}\otimes e_0^*\big) && = \tfrac{1}{24} D^2_{e_1}h_1^R \rho(\check e_1)^2\xi_1^R,\\
	\chi_4^R &= D^3_{e_1}h_1^R \big(-\tfrac14 R_{2,1}\otimes e_2^* -\tfrac14 R_{1,1}\otimes e_0^*\big) && = -\tfrac{1}{36\cdot 4}D^3_{e_1}h_1^R\rho(\check e_1)^3\xi_1^R,\\
	\chi_5^R &= D^4_{e_1}h_1^R \big(\tfrac14 R_{2,1}\otimes e_0^*\big) && = \tfrac{1}{36\cdot 4\cdot 4}D^4_{e_1}h_1^R \rho(\check e_1)^4 \xi_1^R. 
\end{align*}
and
\begin{align*}
	\chi_1^S &= h_1^S S_{1,-1}\otimes e_1^* &&= h_1^S \xi_1^S,\\
	\chi_2^S &= D_{e_2}h_1^S \big(-\tfrac34 S_{1,0}\otimes e_1^* + \tfrac14 S_{1,-1}\otimes e_0^*\big) && = -\tfrac14 D_{e_2}h_1^S \rho(\check e_2) \xi_1^S, \\
	\chi_3^S &= D^2_{e_2} h_1^S \big(\tfrac12 S_{1,1}\otimes e_1^* - \tfrac14 S_{1,0}e_0^*\big) && = \tfrac{1}{24} D^2_{e_2}h_1^S \rho(\check e_2)^2\xi_1^S,\\
	\chi_4^S &= D^3_{e_2}h_1^S \big(-\tfrac14 S_{1,2}\otimes e_1^* + \tfrac14 S_{1,1}\otimes e_0^*\big) && = -\tfrac{1}{36\cdot 4}D^3_{e_2}h_1^S\rho(\check e_2)^3\xi_1^S,\\
	\chi_5^S &= D^4_{e_2}h_1^S \big(-\tfrac14 S_{1,2}\otimes e_0^*\big) && = \tfrac{1}{36\cdot 4\cdot 4}D^4_{e_2}h_1^S \rho(\check e_2)^4 \xi_1^S. 
\end{align*}
\end{prop}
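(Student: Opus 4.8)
The plan is to prove the formulas by induction on the degree $p$, running the recursion~\eqref{chi_p} and organizing the bookkeeping around the decomposition $\lf=\g\oplus\g^{\perp}$, $\g^{\perp}=\Gamma_{3,0}\oplus\Gamma_{0,3}$, together with the bracket relations $[\Gamma_{3,0},\Gamma_{3,0}]\subset\Gamma_{0,3}$, $[\Gamma_{0,3},\Gamma_{0,3}]\subset\Gamma_{3,0}$, $[\Gamma_{3,0},\Gamma_{0,3}]\subset\fsl(3)$. Writing $\chi=\chi^R+\chi^S$ with $\chi^R$ the $\Gamma_{3,0}$-component and $\chi^S$ the $\Gamma_{0,3}$-component, the $\g^{\perp}$-part of~\eqref{chi_p} splits into a $\Gamma_{3,0}$-equation for $\chi^R_p$ and a $\Gamma_{0,3}$-equation for $\chi^S_p$, the mixed brackets $[\chi^R,\chi^S]$ landing in $\fsl(3)\subset\g$ and hence dropping out of $\chi$. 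The base case $p=1$ (with $c_1=1$) is the content of the preceding proposition computing $H^1_{+}(\g_-,\g^{\perp})$. Since $H^1_r(\g_-,\g^{\perp})=0$ for every $r\ge 2$, the recursion~\eqref{chi_p} recovers $\chi_p$ uniquely from its right-hand side $\eta_p$ for $p\ge 2$, concretely via $\chi_p=\Box^{-1}\partial^*\eta_p$ with $\Box=\partial\partial^*+\partial^*\partial$ the Kostant Laplacian; thus the whole argument reduces to computing $\eta_p$ and applying this explicit homotopy inverse of $\partial$.

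The inductive claim carried along is that in each sector $\eta_p$ reduces to its first $\cD$-term, taken in the $\check e_1$- (resp.\ $\check e_2$-)direction: that $\cD_2\wedge\chi^R_{p-2}$ and all quadratic terms $[\chi^S_i,\chi^S_j]$ and cubic terms $\chi^R_i([\chi^R_j,\chi^S_k]_-)$ contribute $0$ to the $\Gamma_{3,0}$-equation, and symmetrically for $\Gamma_{0,3}$. Granting this, one computes $-\cD_1\wedge\chi^R_{p-1}$ from the inductive expression $\chi^R_{p-1}=c_{p-1}\,D_{e_1}^{\,p-2}h_1^R\,\rho(\check e_1)^{p-2}\xi_1^R$, using the structure equations for $\omega_I$ (which encode the Heisenberg structure of $\g_-$) to see that it equals $D_{e_1}^{\,p-1}h_1^R$ times an explicit constant $\Gamma_{3,0}$-valued $2$-cochain. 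Applying $\Box^{-1}\partial^*$ and tracking the action of $\rho(\check e_1)$ on the weight string $R_{-1,1},R_{0,1},R_{1,1},R_{2,1}$ of $\Gamma_{3,0}$ together with the shift $e_2^{*}\mapsto e_0^{*}$ on $\g_-^{*}$ --- ``playing with l'\'etoile de Davide'' --- then yields precisely $\chi^R_p=c_p\,D_{e_1}^{\,p-1}h_1^R\,\rho(\check e_1)^{p-1}\xi_1^R$ with the rational constants $c_p$ of the statement. The recursion stops at $p=5$: the orbit of $\xi_1^R$ under $\rho(\check e_1)$ is a $5$-dimensional (spin-$2$) $\fsl(2)$-string, so $\rho(\check e_1)^{5}\xi_1^R=0$ and $\chi^R_p=0$ for $p\ge 6$. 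Finally the $\Gamma_{0,3}$-formulas follow verbatim by the reflection $\amgiS$ of the star of David interchanging $\Gamma_{3,0}\leftrightarrow\Gamma_{0,3}$, $\check e_1\leftrightarrow\check e_2$, $R\leftrightarrow S$.

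The main obstacle is the inductive claim that the nonlinear terms and the $\cD_2$-term genuinely drop out of each sector; a priori $[\chi^S_i,\chi^S_j]$ with $i+j=p$ lands in $\Gamma_{3,0}$ and could inject $h_1^S$-dependence into $\chi^R_p$, contradicting the decoupling asserted by the proposition. For $i=j=1$ this is immediate, since $\chi^S_1$ has rank one with image $\langle S_{1,-1}\rangle$, so $[\chi^S_1,\chi^S_1]=0$; but for larger $i+j$ one must use the explicit weight vectors $R_{a,b}$, $S_{a,b}$ and the $\fsl(3)$-equivariant projections $\wedge^2\Gamma_{0,3}\to\Gamma_{3,0}$ and $\Gamma_{3,0}\otimes\Gamma_{0,3}\to\fsl(3)$ to verify that the surviving brackets vanish or are killed by $\Box^{-1}\partial^*$. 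This verification, together with the bookkeeping of the constants $c_p$ along the iterated $\fsl(2)$-string (which is what makes the final expressions ``unexpectedly simple''), is the ``direct but cumbersome calculation'', and is where the Maple check is used.
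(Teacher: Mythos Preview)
Your plan is the paper's plan: the paper offers no argument beyond ``playing with l'\'etoile de Davide and carrying detailed computations,'' and your scheme of running the recursion~\eqref{chi_p} degree by degree, splitting by $\Gamma_{3,0}\oplus\Gamma_{0,3}$, and inverting $\partial$ via $\Box^{-1}\partial^*$ is exactly how that computation is organised. Your weight bookkeeping is also right where it matters: for $i+j\le 5$ every bracket $[S_{1,a},S_{1,b}]$ arising from $[\chi^S_i,\chi^S_j]$ has first weight-coordinate $2$ but second coordinate $a+b\neq 1$, hence falls outside $\Gamma_{3,0}$, and similarly the cubic terms land in $\g_{\ge 0}$, so the quadratic and cubic contributions to $\eta^R_p$ really do vanish for $p\le 5$.

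There is one genuine imprecision: the term $\cD_2\wedge\chi^R_{p-2}$ does \emph{not} contribute zero. Already at $p=3$, evaluating on $(e_0,e_2)$ gives $-D_{e_0}h_1^R\,R_{-1,1}$ from $\cD_2\wedge\chi^R_1$, and the $L_{\tilde e_2}$-part of $\cD_1\wedge\chi^R_2$ adds $-\tfrac14 D_{e_2}D_{e_1}h_1^R\,R_{-1,1}$. A short weight check shows that no $\chi^R_3\in\Hom(\g_-,\Gamma_{3,0})_3$ can produce an $R_{-1,1}$-component in $\partial\chi^R_3(e_0,e_2)$, so this combined term is forced to vanish: it is precisely the first compatibility condition $(D_{e_0}+\tfrac14 D_{e_2}D_{e_1})h_1^R=0$ of Proposition~3, which is therefore a \emph{byproduct} of the degree-$3$ step rather than an input. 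The correct inductive statement is thus: the $D_{e_0}$- and $D_{e_2}$-contributions to $\eta^R_p$ assemble into consistency conditions (Proposition~3) that must hold because $\eta^R_p\in\im\partial$, and the remaining $D_{e_1}$-part alone determines $\chi^R_p$. (Also, ``taken in the $\check e_1$-direction'' should read ``in the $e_1$-direction''; $D_{e_1}$ differentiates along $e_1\in\g_{-1}$, while $\rho(\check e_1)$ is the raising operator on the module side.) With this clarification your argument goes through and matches the paper.
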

\begin{prop}
	The functions $\xi_1^R$ and $\xi_1^S$ satisfy the following compatibility conditions:
	\[
	\begin{cases}
		\big(D_{e_0}+\tfrac14 D_{e_2}D_{e_1}\big) h_1^R = 0,\\
		\big(D_{e_0}-\tfrac14 D_{e_1}D_{e_2}\big) h_1^S = 0,
	\end{cases}
	\]
	and
	\[
	\begin{cases}
		D_{e_2}^5 h_1^S = -6 \big(D^4_{e_1}h_1^R\big)h_1^R + 6\big(D^3_{e_1}h_1^R\big)\big(D_{e_1}h_1^R\big)-3\big(D^2_{e_1}h_1^R\big)^2,\\
		D_{e_1}^5 h_1^R = \phantom{-}6\big(D^4_{e_2}h_1^S\big)h_1^S - 6\big(D^3_{e_2}h_1^S\big)\big(D_{e_2}h_1^S\big)+3\big(D^2_{e_2}h_1^S\big)^2.
	\end{cases}
\]
\end{prop}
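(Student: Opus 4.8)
The plan is to read off all four identities by writing out the structure equations~\eqref{chi_p}---which carry the full content of $d\omega+\tfrac12[\omega,\omega]=0$ once $\omega_{II}=\chi\omega_I$ and $\partial^*\chi=0$ are imposed---and substituting the closed formulas of Proposition~\ref{hRS}. The decisive observation is that $\g^\perp=\Gamma_{3,0}\oplus\Gamma_{0,3}$ has no graded piece of degree $>3$ (its extreme weight vectors being $R_{2,1}$ and $S_{1,2}$), so for every $p\ge 6$ we have $\chi_p(e_1),\chi_p(e_2)\in\g^\perp_{p-1}=0$ and $\chi_p(e_0)\in\g^\perp_{p-2}=0$, hence $\chi_p=0$; thus~\eqref{chi_p} for $p\ge 6$ has vanishing left-hand side and turns into a constraint on $h^R_1,h^S_1$. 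At low degree, where $\partial\colon C^1(\g_-,\g^\perp)_p\to C^2(\g_-,\g^\perp)_p$ is not onto, the part of the right-hand side of~\eqref{chi_p} lying outside $\im\partial$ must likewise vanish. The two pairs of identities are exactly these residual equations, at $p=3$ and at $p=6$ respectively.

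For the weighted-order-two relations I would work out~\eqref{chi_3}. Since $\chi^R_1,\chi^R_2$ both vanish on $e_1$ and each of $[\chi^R_1,\chi^R_2]$, $[\chi^S_1,\chi^S_2]$ vanishes as a $2$-form (each factor being supported on a single covector), \eqref{chi_3} collapses to $\partial\chi_3=-\cD_1\wedge\chi_2-\cD_2\wedge\chi_1$. Evaluating the $\Gamma_{3,0}$-part on the pair $(e_0,e_2)$: the term $\cD_2\wedge\chi^R_1$ produces $D_{e_0}h^R_1\cdot R_{-1,1}$ and $\cD_1\wedge\chi^R_2$ produces $\tfrac14 D_{e_2}D_{e_1}h^R_1\cdot R_{-1,1}$, whereas, using $\chi^R_3(e_2)=\tfrac12 D^2_{e_1}h^R_1\,R_{1,1}$, $\chi^R_3(e_0)=\tfrac14 D^2_{e_1}h^R_1\,R_{0,1}$ together with the defining relations $[e_2,R_{0,1}]=R_{0,0}$ and $[e_0,R_{1,1}]\in\langle R_{0,0}\rangle$ of the weight vectors, $(\partial\chi^R_3)(e_0,e_2)$ lies in $\langle R_{0,0}\rangle$ and carries no $R_{-1,1}$. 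Since $R_{0,0}$ and $R_{-1,1}$ are independent, matching coefficients forces $\big(D_{e_0}+\tfrac14 D_{e_2}D_{e_1}\big)h^R_1=0$ (the $R_{0,0}$-coefficient on the left vanishing by a relation among the weight-vector structure constants, which is in any case consistent with Proposition~\ref{hRS}). The $\Gamma_{0,3}$-part on $(e_0,e_1)$ gives the $h^S_1$-relation; alternatively it follows by the reflection of l'\'etoile de Davide interchanging $e_1\leftrightarrow e_2$ and $\Gamma_{3,0}\leftrightarrow\Gamma_{0,3}$, under which $e_0=[e_2,e_1]\mapsto-e_0$, which accounts for the sign in front of $\tfrac14 D_{e_1}D_{e_2}$.

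For the weighted-order-five relations I would spell out~\eqref{chi_6}, which with $\chi_6=0$ reads $\sum_i\cD_i\wedge\chi_{6-i}+\tfrac12\sum_{i+j=6}[\chi_i,\chi_j]-\tfrac12\sum_{i+j+k=6}\chi_i([\chi_j,\chi_k]_-)=0$. Projecting the $\Gamma_{0,3}$-part onto $\g^\perp_3\cap\Gamma_{0,3}=\langle S_{1,2}\rangle$ in the slot $(e_0,e_2)$: the term $\cD_1\wedge\chi^S_5$ contributes $\tfrac14 D^5_{e_2}h^S_1\,S_{1,2}$ (the remaining $\cD\wedge\chi^S$ terms, involving $D_{e_0}$ and mixed derivatives, being reduced to this form using the weighted-order-two relation and the commutators of $D_{e_0},D_{e_1},D_{e_2}$); the brackets $[\chi^R_i,\chi^R_j]$, which land in $\Gamma_{0,3}$ because $[\Gamma_{3,0},\Gamma_{3,0}]\subset\Gamma_{0,3}$, contribute---after inserting the coefficients $h^R_1,\dots,D^4_{e_1}h^R_1$ from Proposition~\ref{hRS} and using $[R_{-1,1},R_{2,1}],[R_{0,1},R_{1,1}]\in\langle S_{1,2}\rangle$---a multiple of $S_{1,2}$ whose coefficient is $-6(D^4_{e_1}h^R_1)h^R_1+6(D^3_{e_1}h^R_1)(D_{e_1}h^R_1)-3(D^2_{e_1}h^R_1)^2$ up to the same normalization that multiplies $D^5_{e_2}h^S_1$; and the mixed terms $\chi_i([\chi^R,\chi^S]_-)$, whose inner brackets take values in $\Gamma_{1,1}$, drop out of this component. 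This is the first fifth-order identity; the second follows symmetrically from the $(e_0,e_1)$-slot and the exchange $R\leftrightarrow S$, the relative sign between the two coming from $e_0\mapsto-e_0$ under the reflection.

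The main obstacle is the bracket arithmetic in $\of(5,3)$: one must determine all the structure constants $[R_{a,b},R_{c,d}]$, $[S_{a,b},S_{c,d}]$, $[R_{a,b},S_{c,d}]$ from the explicit weight vectors built out of $R_{2,1}=A_1\otimes A_7^*-A_2\otimes A_8^*$ and $S_{1,2}=A_1\otimes A_6^*-A_3\otimes A_8^*$, check the numerous linear identities among them that make the spurious components of~\eqref{chi_p} cancel, and verify that the remaining slots at $p=3,6$ and the whole of~\eqref{chi_p} at $p=4,5$ are consequences of the identities already obtained. This is precisely where the symmetry of l'\'etoile de Davide is exploited to halve the computation.
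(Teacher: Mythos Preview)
Your proposal is correct and follows essentially the same approach that the paper (implicitly) uses: the paper offers no separate proof of this proposition, only the remark preceding Proposition~\ref{hRS} that both results come from ``playing with l'\'etoile de Davide and carrying detailed computations'' in the recursion~\eqref{chi_p}. Your plan—extracting the first pair of identities from the component of the degree-$3$ equation lying outside $\im\partial$, and the second pair from the degree-$6$ equation where $\chi_6=0$ because $\g^\perp$ is concentrated in degrees $\le 3$—is exactly that computation, spelled out in far more detail than the paper provides.

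One small inaccuracy worth fixing: you justify $[\chi_1^R,\chi_2^R]=0$ by saying ``each factor being supported on a single covector,'' but $\chi_2^R$ is supported on both $e_2^*$ and $e_0^*$. The bracket still vanishes, however, because the only nontrivial pairing is $[\chi_1^R(e_2),\chi_2^R(e_0)]\propto[R_{-1,1},R_{-1,1}]=0$, while $\chi_1^R$ and $\chi_2^R$ both vanish on $e_1$. The rest of your weight-and-degree bookkeeping (that $(\partial\chi_3^R)(e_0,e_2)\in\langle R_{0,0}\rangle$, that the $[\chi^R,\chi^R]$ brackets at degree $6$ hit only $S_{1,2}$, that the cubic terms and the $[\chi^S,\chi^S]$ terms vanish on the $(e_0,e_2)$ slot, and that the $R\leftrightarrow S$ reflection sends $e_0\mapsto -e_0$) is accurate.
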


\subsection{Geometric interpretation of vanishing $\chi^R$ or $\chi^S$}

Consider the osculating map:
\[
\varphi\colon (M^3,\f)\to \Flag(V,\phi), \quad V=\R^8\cong \fsl(3,\R),
\]
modeled by the highest root orbit of the adjoint representation of $SL(3,\R)$.

Note that this map defines also the embedding $M^3\to P^7=P(V)$ via the natural projection $\Flag(V,\phi)\to P^7$. In fact, $\varphi$ can be reconstructed from this embedding via the flag of (weighted) osculating spaces. 

According to Theorem~\ref{thm1} we have the natural extrinsic normal Cartan connection $(P,\omega)$ on $M^3$:
\[
\pi\colon P\to M^3,\quad TP\to \of(5,3).
\]
Recall that $\omega$ is decomposed as $\omega=\omega_{I}+\omega_{II}$ according to the decomposition $\of(5,3)=\g\oplus\g^{\perp}$, where $\g$ is an image of the adjoint representation of $\fsl(3,\R)$. 

Then $\om_{I}$ defines an (intrinsic) Cartan connection on $M^3$ modeled by the homogeneous space $PSL(3,\R)/B$, where $B$ is the Borel subgroup in $PSL(3,\R)$ consisting of upper triangular matrices. 
In particular, this defines the splitting of $\f^{-1}$ into two line bundles $l^R\oplus l^S$, where 
\begin{align*}
	l^R &= \pi_*\om_{I}^{-1}(e_2),\quad e_2 = E_{32};\\
	l^S &= \pi_*\om_{I}^{-1}(e_1),\quad e_1 = E_{21}.
\end{align*}

In accordance with the terminology used for hyperbolic surfaces in $P^3$, we call these line bundles \emph{asymptotic directions} of the osculating map $\varphi$, and their integral curves the \emph{asymptotic curves}. To distinguish between these two line bundles we call integral curves of $l^R$ the $R$-asymptotic curves or just $R$-curves and similarly $S$-asymptotic curves or $S$ curves for integral curves of $l^S$.

\begin{prop} $R$-asymptotic curves ($S$-asymptotic curves) embed into $P^7$ as straight lines if and only if $\chi^R\equiv0$ (resp. $\chi^S\equiv0$). 
\end{prop}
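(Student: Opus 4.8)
The plan is to work on the Cartan bundle $P \to M^3$ and translate the statement ``an $R$-asymptotic curve embeds into $P^7$ as a straight line'' into a condition on the curvature function $\chi$ along that curve. Recall that $l^R = \pi_*\omega_I^{-1}(e_2)$, so an $R$-curve is (the projection of) an integral curve of the vector field $\tilde e_2$ on $P$ determined by $\omega_I(\tilde e_2) = e_2$. First I would fix a point $z_0 \in P$ over a point of the chosen $R$-curve and use $\omega$ together with the Maurer–Cartan equation $d\omega + \tfrac12[\omega,\omega] = 0$ to develop the curve: the development is governed by the ODE $g^{-1}\dot g = \omega(\dot c(t))$ in $L = O(5,3)$, and since along an integral curve of $\tilde e_2$ we have $\omega_I = e_2$ (constant) and $\omega_{II} = \chi(e_2)$, the development is driven by the $\lf$-valued function $t \mapsto e_2 + \chi_{c(t)}(e_2)$. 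The image in $P^7 = P(V)$ is then $t \mapsto [\,g(t)\cdot v_0\,]$ for the appropriate highest-weight line $v_0 \in V$, and I would compute its successive derivatives (osculating flag) in terms of iterated applications of $\ad(e_2 + \chi(e_2))$ to $v_0$.

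The key step is the following: a curve in $P^7$ is a straight line if and only if its osculating flag is constant, i.e.\ its second osculating space coincides with the tangent line direction —equivalently, the third-order derivative of the lift lies in the span of the lift and its first two derivatives. When $\chi^R \equiv 0$ (but $\chi^S$ arbitrary), the development along the $R$-curve is driven purely by $e_2 + \chi^S(e_2)$; here one must check, using the bracket relations $[\Gamma_{0,3},\Gamma_{0,3}] \subset \Gamma_{3,0}$ and $[\Gamma_{3,0},\Gamma_{0,3}] \subset \fsl(3)$ recorded in Section~\ref{sec:letoile}, together with the explicit weight-vector bookkeeping on l'\'etoile de Davide, that the flag of weighted osculating spaces stays within a $2$-dimensional subspace of $V$, so the image is a line. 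Concretely, $e_2 = E_{32} = \check e_2$ acts on $V = \fsl(3,\R)$ by $\ad$, and $\ad(e_2)$ on the relevant highest-weight line generates exactly a line's worth of motion; the contributions of $\chi^S(e_2)$, living in $\Gamma_{0,3}$, must be shown not to push the osculating flag out of that line, which is where the precise coefficients in Proposition~\ref{hRS} (all the $\chi_j^S$ are proportional to $\rho(\check e_2)^{j-1}\xi_1^S$, i.e.\ built from $e_1 = \check e_1$-derivatives, not $e_2$) do the work: differentiating along $\tilde e_2$ only raises the $e_1$-weight, and the span closes up.

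For the converse I would argue contrapositively: if $\chi^R \not\equiv 0$, then by Proposition~\ref{hRS} the leading term $\chi_1^R = h_1^R\,R_{-1,1}\otimes e_2^*$ is nonzero somewhere, and $R_{-1,1}$ is a weight vector of $\Gamma_{3,0}$ that, under $\ad(e_2)$ and its iterates, produces a genuinely $2$-or-more-dimensional osculating increment transverse to the $e_2$-line; one checks directly from the \'etoile that $\ad(e_2)R_{-1,1}$, $\ad(e_2)^2 R_{-1,1}$ span a plane independent of the $\Gamma_{1,1}$-part, so the third derivative of the developed curve escapes the span of the first two, forcing nonzero projective curvature. This reduces the whole proposition to a finite linear-algebra check on $V = \fsl(3,\R)$ with $\ad(e_2)$ and the weight vectors $R_{\bullet,\bullet}$, $S_{\bullet,\bullet}$.

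The main obstacle I anticipate is bookkeeping the \emph{weighted} osculating flag correctly: because $(M,\f)$ is a contact manifold and $\varphi$ is only a morphism of filtered manifolds, the osculating spaces along a curve are not the naive jets but the weighted ones, and one must be careful that moving along $l^R = \langle e_2\rangle$ (a degree $-1$ direction that is \emph{not} the contact-bracket-generating one in isolation) interacts correctly with the grading of $V = \oplus_{q=0}^4 V_q$. Verifying that ``straight line in $P^7$'' is the right intrinsic condition —rather than some weighted-degeneracy condition— and matching it cleanly to $\chi^R \equiv 0$ is the delicate point; everything after that is the routine \'etoile-de-Davide computation already set up in Section~\ref{sec:letoile}.
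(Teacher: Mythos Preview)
Your general framework is right---develop the $R$-curve via the moving frame $\omega(\tilde e_2) = e_2 + \chi(e_2)$ and read off the osculating flag---and this is exactly what the paper does. But you add complications that are not there and miss the simplification that makes the argument short.

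First, your worry about $\chi^S(e_2)$ is empty. Look again at Proposition~\ref{hRS}: every $\chi_j^S$ is a linear combination of terms $S_{\bullet,\bullet}\otimes e_1^*$ and $S_{\bullet,\bullet}\otimes e_0^*$; there is no $e_2^*$ component at all. Hence $\chi^S(e_2)=0$ identically, and $\omega(\tilde e_2) = e_2 + \chi^R(e_2)$. There is nothing to check about $\Gamma_{0,3}$-contributions along $R$-curves. (Your parenthetical ``$e_1=\check e_1$-derivatives'' and ``$e_2 = E_{32} = \check e_2$'' are both wrong as written; in the paper's conventions $e_i$ and $\check e_i$ are opposite root vectors.)

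Second, your converse is looking at the wrong action. What matters is not $\ad(e_2)R_{-1,1}$ inside $\lf$, but how $R_{-1,1}$ acts as an \emph{operator on $V$}. From the explicit formula $R_{-1,1}=-6(A_3\otimes A_2^* - A_7\otimes A_6^*)$ one sees that $R_{-1,1}$ sends $A_2=E_{12}$ to $-6A_3=-6E_{23}$. The paper simply writes $\omega(\tilde e_2)$ as an $8\times 8$ matrix in the basis $A_1,\dots,A_8$: below the block-diagonal the only entry that can kick the osculating flag past $\langle A_1,A_2\rangle$ is the $(3,2)$-entry $-6h_1^R$. Thus $A_1\mapsto -A_2 \bmod \langle A_1\rangle$ and $A_2\mapsto -6h_1^R A_3 \bmod \langle A_1,A_2\rangle$, so the curve stays in a line in $P^7$ iff $h_1^R\equiv 0$. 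Proposition~\ref{hRS} then gives $\chi^R\equiv 0$, since every $\chi_j^R$ is a $D_{e_1}$-derivative of $h_1^R$. No weighted-osculation subtlety arises: one is just computing the ordinary osculating flag of a curve in $P^7$ via its moving frame.
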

\begin{proof}
	Let $e_2^*$ be the fundamental vector field on $P$ corresponding to $e_2\in\fsl(3,\R)$ with respect to the Cartan connection $\om_I$, that is $\om_I(e_2^*)=e_2$. Then $\om(e_2^*)$ defines a \emph{moving frame} over each $R$-asymptotic curve. 
	
	The value of $\om(e_2^*)$ in basis~\eqref{eq:basis} has the form:
	\[
		\om(e_2^*) = \begin{pmatrix}
			* & * & * & * & * & * & * & * \\
			-1 & * & * & * & * & * & * & * \\
			0 & -6 h_1^R & * & * & * & * & * & * \\
			0 & 0 & 0 & 0 & 0 & * & * & * \\
			0 & 0 & -1 & 0 & 0 & * & * & * \\
			0 & 0 & 0 & -1 & 2 & * & * & * \\
			0 & 0 & 0 & 0 & 0 & 6h_1^R & * & * \\
			0 & 0 & 0 & 0 & 0 & 0 & 1 & * \\
		\end{pmatrix}
	\]
	It follows that $\om(e_2^*)$ acts on $A_1=E_{13}$ (the highest weight vector of the adjoint representation of $PSL(3,\R)$) as:
	\begin{align*}
		\om(e_2^*)\colon E_{13}&\mapsto -E_{12}\mod \langle E_{13}\rangle,\\
		\om(e_2^*)\colon E_{12}&\mapsto -6 h^R_1 E_{23} \mod \langle E_{12}, E_{13}\rangle.
	\end{align*}
	Thus, the osculating flag of the $R$-asymptotic curves stabilizes at the 2-dimensional subspace $\langle E_{13}, E_{12}\rangle$ (or at an 1-dimensional line in $P^7$) if and only if $\chi^R_1$ vanishes identically on $P$. According to Proposition~\ref{hRS} this also implies that $\chi^R$ vanishes identically on $P$.
\end{proof}

On the other hand, we have:
\begin{prop}
	If either $\chi^R$ or $\chi^S$ vanishes identically, then the Cartan connection defined by $\om_I$ is flat. 
\end{prop}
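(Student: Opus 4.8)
The plan is to reduce everything to the structure equation for the intrinsic connection $\om_I$ and to exploit the compatibility conditions of Proposition \ref{hRS} together with the bracket relations among $\Gamma_{1,1}$, $\Gamma_{3,0}$, $\Gamma_{0,3}$. Recall that $\om = \om_I + \om_{II}$ with $\om_{II} = \chi\om_I$, $\chi = \chi^R + \chi^S$, and $d\om + \tfrac12[\om,\om] = 0$. Projecting the structure equation onto the $\g = \Gamma_{1,1}$ component, and using $[\g,\g]\subset\g$, $[\g,\g^\perp]\subset\g^\perp$, $[\Gamma_{3,0},\Gamma_{3,0}]\subset\Gamma_{0,3}$, $[\Gamma_{0,3},\Gamma_{0,3}]\subset\Gamma_{3,0}$, and crucially $[\Gamma_{3,0},\Gamma_{0,3}]\subset\Gamma_{1,1}$, one obtains
\[
d\om_I + \tfrac12[\om_I,\om_I] = -\tfrac12\big[\chi\om_I\wedge\chi\om_I\big]_{\Gamma_{1,1}} = -\big[\chi^R\om_I\wedge \chi^S\om_I\big]_{\Gamma_{1,1}}.
\]
So the curvature of $\om_I$ is a quadratic expression that is bilinear in $\chi^R$ and $\chi^S$: every term contains one factor from $\chi^R$ and one from $\chi^S$. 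Hence if either $\chi^R \equiv 0$ or $\chi^S \equiv 0$, the right-hand side vanishes identically, giving $d\om_I + \tfrac12[\om_I,\om_I] = 0$, which is exactly flatness of the Cartan connection $\om_I$ modeled on $PSL(3,\R)/B$.

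First I would write out the full structure equation $d\om + \tfrac12[\om,\om]=0$ in the splitting $\lf = \Gamma_{1,1}\oplus\Gamma_{3,0}\oplus\Gamma_{0,3}$, separating it into three component equations, and isolate the $\Gamma_{1,1}$-component. Here I must be careful with the terms $[\om_I,\chi\om_I]$: since $[\g,\g^\perp]\subset\g^\perp$, these land entirely in $\g^\perp$ and contribute nothing to the $\Gamma_{1,1}$-part. The only $\Gamma_{1,1}$-contribution from $[\om_{II},\om_{II}]$ comes from the cross term $[\Gamma_{3,0},\Gamma_{0,3}]$, i.e. from pairing $\chi^R$ against $\chi^S$. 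This is the step that makes the whole argument work, and it is short; it is essentially a bookkeeping consequence of the $\mathbb{Z}/2$-grading $\lf = \g \oplus \g^\perp$ refined by the bracket table displayed in Section \ref{sec:letoile}.

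I do not actually need the explicit closed forms of $\chi^R_i, \chi^S_i$ from Proposition \ref{hRS} for this argument — only the bilinearity of the curvature term — so the proof is genuinely short. The mild subtlety, and the step I expect to require the most care, is confirming that there is no other source of a $\Gamma_{1,1}$-valued term: in particular one should check that the "connection-type" terms $\cD\wedge\chi$ and the normality/codifferential corrections built into the structure equations \eqref{chi_p} do not reintroduce a $\Gamma_{1,1}$-component into $d\om_I + \tfrac12[\om_I,\om_I]$. This is handled by noting that those terms are precisely what the structure equation for $\chi$ (taking values in $\Hom(\g_-,\g^\perp)$) absorbs, so they live in $\g^\perp$ and are invisible to the $\Gamma_{1,1}$-projection. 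Once this is pinned down, vanishing of either $\chi^R$ or $\chi^S$ kills the unique remaining obstruction term $[\chi^R\om_I\wedge\chi^S\om_I]_{\Gamma_{1,1}}$, and flatness of $\om_I$ follows immediately.
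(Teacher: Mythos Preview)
Your argument is correct and is essentially the same as the paper's: both project the structure equation $d\om+\tfrac12[\om,\om]=0$ onto $\g=\Gamma_{1,1}$ and use the bracket table $[\g,\g^\perp]\subset\g^\perp$, $[\Gamma_{3,0},\Gamma_{3,0}]\subset\Gamma_{0,3}$, $[\Gamma_{0,3},\Gamma_{0,3}]\subset\Gamma_{3,0}$, $[\Gamma_{3,0},\Gamma_{0,3}]\subset\Gamma_{1,1}$ to see that the only $\g$-valued contribution from $\tfrac12[\om_{II},\om_{II}]$ is the cross term $[\chi^R\om_I\wedge\chi^S\om_I]$. The only cosmetic difference is that the paper first sets (say) $\chi^R=0$ and then observes $[\om_{II},\om_{II}]\subset\Gamma_{3,0}\subset\g^\perp$, whereas you write the general curvature formula and note its bilinearity; your detour through the $\cD\wedge\chi$ terms and Proposition~\ref{hRS} is unnecessary, as you yourself conclude.
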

\begin{proof}
	Indeed, assuming that $\chi_R=0$, we get that $\om_{II}$ takes values only in the representation $\Gamma_{0,3}$ of the $\fsl(3,\R)$ decomposition of $\bar\g^{\perp}$ as $\Gamma_{3,0}\oplus \Gamma_{0,3}$. Note that $[\Gamma_{0,3}, \Gamma_{0,3}]\subset \Gamma_{3,0}$. We see that in the decomposition
	\[
	[\om,\om]=[\om_I+\om_{II},\om_{I}+\om_{II}]
	\]
	only the term $[\om_{I},\om_{I}]$ lies in $\bar\g$. Hence, the structure equation $d\om=\tfrac{1}{2}[\om,\om]$ implies that
	\[
		d\om_I + \tfrac{1}{2}[\om_I,\om_I] = 0,
	\]
	and the Cartan connection $\om_I$ has zero curvature.
\end{proof}

\section{Transitive structures}
\subsection{Transitive Cartan bundles and transitive embeddings}
Let $\varphi\colon (M,\f)\to \Flag(V,\phi)$ be an osculating embedding of type $(\g_{-},V,L)$ and let $(P,\omega)$ be the corresponding Cartan bundle with the canonical projection $\pi\colon P\to M$. 

\begin{df}
We say that $(P,\omega)$ is \emph{transitive}, if the group $\Aut(P,\omega)$ acts transitively on the fibers of the projection $\pi$. In other words, for any two points $x_1,x_2\in M$ there exist $z_i\in\pi^{-1}(x_i)$, $i=1,2$, and an automorphism $\psi\in\Aut(P,\omega)$ such that $\psi(z_1)=z_2$.
\end{df}

We say that the embedding $\varphi\colon (M,\f)\to \Flag(V,\phi)$ is transitive, if the corresponding Cartan bundle is transitive.

For the classification, we consider that $(P,\omega)$ is a principal fibre bundle over a neighborhood of a point $\mathring{x}\in(M,\f)$ and that each fibre of $P$ is connected. 

Recall that we have the following commutative diagram:
\[
\begin{CD}
P  @>\Phi>>  L \\
@VVV @VVV \\
M  @>\varphi>>  L/L^0
\end{CD}
\]	
such that $\Phi$ is a bundle map with $\Phi^*\Omega_L =\omega$. The map $\Phi$ ia unique up to a left multiplication by $L_a$, $a\in L$. 

For any $h\in \Aut(P,\omega)$ we have 
\[
\begin{CD}
P  @>\Phi>>  L \\
@V{h}VV @VV{L_b}V \\
P  @>\Phi>>  L
\end{CD}
\]
for some unique $b\in L$, which determines an embedding
\[
\iota\colon \Aut(P,\omega)\to L.
\]
Different choice $\Phi'$ gives a conjugate embedding $\iota'$. 

Fix a point $\mathring{z}\in P$ such that $\pi(\mathring{z})=\mathring{x}$. Then there exists a unique embedding $\Phi\colon P\to L$ such that $\Phi(\mathring{z})=e_L$ and $\Phi^*\Omega_L = \omega$. This $\Phi$, in its turn, determines an embedding
\[
\iota \colon \Aut(P,\omega)\to L
\]
and, hence, the injective map of Lie algebras
\[
	\iota_*\colon \aut(P,\omega)\to \lf.
\]
This Lie algebra homomorphism is given by:
\[
\omega_{\mathring{z}}\colon T_{\mathring{z}}(\Aut(P,\omega)\mathring{z})\to \lf,
\]
where by $\Aut(P,\omega)\mathring{z}$ we denote the orbit of $\Aut(P,\omega)$ through the point~$\mathring{z}$.

Let $H$ be the automorphism group $\Aut(P,\omega)$ and let $\Hf$ be the corresponding Lie algebra. Let $Q$ be the $H$-orbit through $\mathring{z}\in P$. The tangent space $T_{\mathring{z}}Q$ is identified with $\Hf$, and we have a Lie algebra embedding:
\[
\omega_{\mathring{z}}\colon \Hf \to \lf.
\] 
The filtration of $T_{\mathring{z}}P$ induces that of $\Hf$, and $	\omega_{\mathring{z}}$ preserves the filtrations, that is,
\[
\omega_{\mathring{z}}(\phi^k\Hf)\subset \phi^k\lf.
\]
Then we have an embedding of graded Lie algebras:
\[
\hf = \gr \Hf \to \gr\lf = \lf. 
\]
By the assumption of $H$ being 	``base transitive'', we have
\[
\hf_{-} = \g_{-}.
\]

Now we are going to look for $\mathring{z}\in P$, which gives a normal form of $(P,\omega)$.  

Define now the function $\chi\colon \g_{-}\to \g^{\perp}$ via the following commutative diagram:
\[
\begin{tikzcd}
	T_{\mathring{z}}Q\arrow[rr, "\omega_{II}"]\arrow[dr, "\omega_{-}"]& & \g^{\perp} \\ 	& \g_{-} \cong \g / \phi^0\g \arrow[ur, "\chi"]  & 
\end{tikzcd}
\]
Note that $\chi$ is well-defined as both $\om_{II}$ and $\om_{-}$ vanish on $\ker \pi_* \subset T_{\mathring{z}}Q$, and $\om_{-}$ induces an isomorphism of $T_{\mathring{z}}Q/\ker \pi_*$ and $\g_{-}$.

\subsection{The case of a non-trivial stabilizer}\label{ss_stab}
Let us describe all transitive embeddings $\varphi\colon (M,\f)\to \Flag(V,\phi)$ for which the group $H=\Aut(P,\omega)$ acts on $M$ with a stabilizer of dimension at least $1$, or, equivalently, when $\phi^0\Hf\ne 0$, or when $\hf_{0}\ne 0$.  

If both $\chi_1^R$ and $\chi_1^S$ vanish identically, then we know that the embedding $\varphi$ is flat and $\Hf=\g$. We exclude this trivial case from consideration and assume that at least one of $\chi_1^R$ and $\chi_1^S$ does not vanish identically.

Assume first that both $\chi_1^R$ and $\chi_1^S$ do not vanish. Then there exists a point $\mathring{z}\in P$ such that $h_1^R(\mathring{z})=h_1^S(\mathring{z})=1$, that is 
\begin{align*}
	\chi_1^R(\mathring{z}) &=\xi_1^R = R_{-1,1}\otimes e_2^*,\\
	\chi_1^S (\mathring{z}) & =\xi_1^S = S_{1,-1} \otimes e_1^*. 
\end{align*}
Indeed, since $R^*_a\chi = \rho(a)^{-1}\chi$ for $a\in G^0$, this can be realized by a translation of some $\bar a \in G^0/\chi^1 G$. 

Note that the Lie algebra 
\begin{equation}\label{eq:stabRS}
	\g_0^{R,S} = \{ x\in\g_0 \mid \rho(x)\xi_1^R=\rho(x)\xi_1^S = 0\}
\end{equation}
is trivial. Indeed, for $x=\lambda_1H_1 + \lambda_2H_2$, we have  
\begin{multline}\label{ann_chi}
\rho(x)\xi_1^S = \rho(x) S_{1,-1}\otimes e_1^* \\ 
= \langle 2\alpha_1-\alpha_2, \lambda_1H_1 + \lambda_2H_2\rangle \xi_1^R = (5\lambda_1 - 4\lambda_2)\xi_1^S,
\end{multline}
and similarly
\begin{equation}\label{ann_chi_R}
 \rho(x)\xi_1^R = (-4\lambda_1+5\lambda_2)\xi_1^R,
\end{equation}
which implies~\eqref{eq:stabRS}.

Thus, we have $\hf_{0} = \g^{R,S}_0 = 0$, and the stabilizer is necessarily trivial in this case.

Assume now that $\chi_1^S$ vanishes identically, and $\chi_1^R$ does not vanish. Choose a point $\mathring{z}\in P$ such that $h_1^R(\mathring{z})=1$, that is  
\begin{align*}
	\chi_1^R(\mathring{z}) &=\xi_1^R = R_{-1,1}\otimes e_2^*,\\
	\chi_1^S (\mathring{z}) & =\xi_1^S = 0. 
\end{align*}

Let us define
\begin{equation}\label{eq:stabS}
	\g_0^{R} = \{ x\in\g_0 \mid \rho(x)\xi_1^R = 0\}.
\end{equation}

According to~\eqref{ann_chi_R} we have
\[
\g_0^{R} = \langle 5 H_1 + 4 H_2 \rangle.
\] 
Note that $\hf_{0}\subset \g_0^{R}$ and thus $\hf\subset \Prol(\g_{-}\oplus \g_0^{R})$. It is easy to see that
\[
\Prol(\g_{-}\oplus \g_0^{R}) = \g_{-}\oplus \g_0^{R}.
\]
So, the dimension of $H$ is maximally $4$, and this dimension is achieved if and only if $\hf_{0}=\g_0^{S}$.

Let us assume that this is indeed the case. Then $\Hf^0$ contains a unique element $h=5 H_1 + 4 H_2 + h'$, where $h'\in \phi^1\g$. But since $5 H_1 + 4 H_2$ acts on $\phi^1\g$ with positive eigenvalues, we can always modify the point $\mathring{z}$ by the action of $\phi^1 G$ to get $h'=0$. So, without loss of generality we can assume that 
\[
\Hf^0 = \langle h=5 H_1 + 4 H_2 \rangle.
\]

\begin{prop}\label{p:cayley}  Assume that the embedding $\varphi\colon (M,\f)\to \Flag((V,\phi)$ is not flat and $\dim \Hf \ge 4$. Then up to the action of $G^{(0)}$  we have:
	\begin{itemize}
		\item $\chi = \xi^R_1$;
		\item $\Hf = \langle e_0, e_1, e_2+R_{-1,1}, 5 H_1 + 4 H_2 \rangle$.
	\end{itemize}
\end{prop}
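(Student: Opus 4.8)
The plan is to build on the normalizations already made above: we are in the case $\chi_1^S\equiv0$, $\chi_1^R\not\equiv0$, the point $\mathring z\in P$ has been chosen so that $h_1^R(\mathring z)=1$ and $\Hf^0=\langle h\rangle$ with $h=5H_1+4H_2$, and it has been shown that $\gr\Hf=\g_-\oplus\g_0^R$ with $\g_0^R=\langle h\rangle$. (The mirror case $\chi_1^R\equiv0$, $\chi_1^S\not\equiv0$ is treated identically through the $R\leftrightarrow S$ symmetry of l'\'etoile de Davide and produces the mirror normal form, which is why a single representative appears.)

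For the first bullet I would argue as follows. Since $h_1^S\equiv0$, Proposition~\ref{hRS} gives $\chi^S\equiv0$, so $\chi(\mathring z)=\sum_{i=1}^{5}\chi^R_i(\mathring z)$. Choose $\xi\in\aut(P,\omega)$ with $\omega_{\mathring z}(\xi)=h\in\g_0$; since $\omega_I$ is a pointwise linear isomorphism, $\xi$ coincides at $\mathring z$ with the fundamental vector field $\widetilde h$. An automorphism preserves $\omega$, hence the canonically attached function $\chi$, so $d\chi_{\mathring z}(\widetilde h_{\mathring z})=0$; on the other hand, differentiating the equivariance $R^*_a\chi=\rho(a)^{-1}\chi$ at $h\in\g_0$ gives $d\chi_{\mathring z}(\widetilde h_{\mathring z})=-\rho(h)\chi(\mathring z)$, so $\rho(h)\chi(\mathring z)=0$. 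But by Proposition~\ref{hRS} each $\chi^R_i(\mathring z)$ is a scalar multiple of $\rho(\check e_1)^{\,i-1}\xi^R_1$, which is a $\rho(h)$-eigenvector of eigenvalue $6(i-1)$ (since $\rho(h)\xi^R_1=0$ by~\eqref{ann_chi_R} and $[\rho(h),\rho(\check e_1)]=6\rho(\check e_1)$, because $[h,\check e_1]=\alpha_1(h)\,\check e_1=6\check e_1$); these eigenvalues are pairwise distinct, so $\rho(h)\chi(\mathring z)=0$ forces $\chi^R_i(\mathring z)=0$ for $i\ge2$, whence $\chi(\mathring z)=h_1^R(\mathring z)\,\xi^R_1=\xi^R_1$.

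For the second bullet I would recover the subalgebra $\omega_{\mathring z}(\Hf)\subset\lf$ from its symbol $\gr\Hf=\g_-\oplus\langle h\rangle$ together with the identity $\chi=\xi^R_1$. Pick $\hat e_0,\hat e_1,\hat e_2\in\Hf$ representing the symbols $e_0\in\g_{-2}$ and $e_1,e_2\in\g_{-1}$ modulo $\Hf^0$. Since $\omega_{II}=\chi\circ\omega_-$ and $\xi^R_1$ sends $e_2\mapsto R_{-1,1}$ while annihilating $\g_{-2}$ and $e_1$, the $\g^\perp$-parts are forced: $\omega_{\mathring z}(\hat e_1)=e_1+a_1$, $\omega_{\mathring z}(\hat e_2)=e_2+R_{-1,1}+a_2$, and $\omega_{\mathring z}(\hat e_0)=e_0+b+e_2^*(b)\,R_{-1,1}+a_0$, with $a_0,a_1,a_2\in\phi^0\g$ and $b\in\g_{-1}$; subtracting from $\hat e_0$ a suitable combination of $\hat e_1,\hat e_2$ removes $b$ and with it the $R_{-1,1}$-term. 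It then remains to see $a_0=a_1=a_2=0$, and for this I would use $[h,\hat e_j]\in\Hf$: comparison of symbols gives $[h,\hat e_j]\equiv -c_j\hat e_j\pmod{\Hf^0}$ with $(c_0,c_1,c_2)=(9,6,3)$, that is, $(\ad(h)+c_j)a_j\in\langle h\rangle$. Since the eigenvalues of $\ad(h)$ on $\phi^0\g$ are $0$ on $\g_0$, $6$ and $3$ on $\g_1$, and $9$ on $\g_2$, the operator $\ad(h)+c_j$ is invertible on $\phi^0\g$ and sends $h$ to $c_jh$; hence $a_j\in\langle h\rangle$, which we absorb into $\hat e_j$ by a change of filtered basis not altering $\Hf$. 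This gives $\Hf=\langle e_0,\,e_1,\,e_2+R_{-1,1},\,5H_1+4H_2\rangle$, and one verifies it is a subalgebra of $\lf$ using $[e_1,R_{-1,1}]=\ad(e_1)^4R_{2,1}=0$ and $[e_0,R_{-1,1}]=0$, both holding because $\Gamma_{3,0}$ has no weight vector of weight $-2\alpha_1+\alpha_2$ or $-2\alpha_1$.

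The main obstacle is the weight bookkeeping on l'\'etoile de Davide: one must identify precisely which weight vectors of $\lf$ occur among the $\chi^R_i$ and among the admissible corrections $a_j$, and check that $h=5H_1+4H_2$ acts on all of them with nonzero eigenvalue except on $\xi^R_1$ and on $\g_0^R$ itself. Once this is in hand — which is exactly what the explicit formulas of Proposition~\ref{hRS} and the root diagram supply — everything else is formal manipulation of the filtered Lie-algebra embedding $\omega_{\mathring z}\colon\Hf\hookrightarrow\lf$.
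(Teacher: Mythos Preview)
Your argument is correct and follows essentially the same line as the paper: both proofs rest on the observation that, once $\Hf^0=\langle h\rangle$ with $h=5H_1+4H_2$ has been arranged, $h$-invariance forces the data $\chi$ and $\psi$ to lie in the zero $h$-eigenspaces of $\Hom(\g_-,\g^\perp)$ and $\Hom_+(\g_-,\g)$, which are then identified. The only cosmetic differences are that for $\chi$ the paper enumerates all weights of $\Hom(\g_-,\g^\perp)$ directly (without invoking Proposition~\ref{hRS}), while you restrict a priori to the five vectors $\rho(\check e_1)^{\,i-1}\xi^R_1$ and compute their $h$-eigenvalues $6(i-1)$; and for the $\g$-corrections the paper packages them as a single $\Hf^0$-invariant $\psi\in\Hom_+(\g_-,\g)$ (defined modulo $\Hom_+(\g_-,\Hf^0)$) and checks weights, while you analyse each $a_j$ via the invertibility of $\ad(h)+c_j$ on $\phi^0\g$. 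These are the same computation in two dialects.
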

\begin{proof}
	As it was shown above, if  $\dim \Hf^0 >0$ and $\xi^R_1\ne 0$, then up the action of $\phi^1G$  we can assume that $\Hf^0 = \langle 5 H_1 + 4 H_2 \rangle$.
	
	It is clear that $\chi$ is $\Hf^0$-invariant, that is $h.\chi=0$. This implies that $\chi$ belongs to weight subspaces of weights $k(\alpha_1-2\alpha_2)$, $k\in\mathbb{Q}$ in the weight decomposition of $\Hom(\g_{-}, \g^{\perp})$.
	
	Note that the vector space $\Hom(\g_{-}, \Gamma_{0,3})$ has the following weights with respect to the action of $\g_0$ (represented in terms of simple roots of $\g=\fsl(3,\R)$):
	\[
	\alpha + \omega, 
	\]
	where $\alpha=\alpha_1$, $\alpha_2$, or $\alpha_1+\alpha_2$ and $\omega$ is one of the weights of $\Gamma_{3,0}$. From l'\'Etoile diagram of $\Gamma_{0,3}$ we see that none of these weights is equal (or proportional) to $\alpha_1-2\alpha_2$. 
	
	Similarly, considering $\Hom(\g_{-}, \Gamma_{3,0})$ we see that the only weight proportional to $\alpha_1-2\alpha_2$ is $2\alpha_1-\alpha_2$ itself, and the corresponding weight subspace is spanned by $R_{-1,1}\otimes e_2^*=\xi^R_1$. 
	
	As we already know that $h^R_1(\mathring{z^0})=1$, we get $\chi = \xi^R_1$. This implies that $\Hf$ can be defined by an $\Hf^0$-invariant map $\psi\in\Hom_{+}(\g_{-},\g)$ such that:
	\[
	X+\chi(X)+\psi(X)\in \Hf\quad\text{for all }X\in\g_{-}.
	\]
	It is easy to see that $\psi$ is uniquely defined modulo $\Hom_{+}(\g_{-},\Hf^0)$. 
	
	Again, by considering the weights of the space $\Hom_{+}(\g_{-},\g)$, we see that none of them is proportional to $\alpha_1-2\alpha_2$. This implies that $\psi=0$, which completes the proof.
\end{proof}

It is not difficult to describe corresponding embedding $\varphi$ explicitly. It does already appear in \cite[Section 5.5]{DMM1} as embedding that corresponds to the following system of PDEs:
\begin{align*}
	Z_1^2u &= 0,\\
	Z_2^2u &= a Z_1u
\end{align*}
for any non-zero constant $a$. In fact, all such systems are equivalent via an appropriate rescaling. Note that $\Hf$ is isomorphic to $\hf = \gr \Hf$. Thus, we can assume that $Z_1$ and $Z_2$ can be written as follows in suitable local coordinates $(x,y,z)$:
\[
Z_1 = \frac{\partial}{\partial x} + \frac{1}{2}y\frac{\partial}{\partial z},\quad
Z_2 = \frac{\partial}{\partial y} - \frac{1}{2}x\frac{\partial}{\partial z}.
\]
 Then the above system of PDEs has an 8-dimensional solution space with the basis:
 \begin{align*}
 &1,\quad x+\tfrac{a}{2}y^2,\quad y,\\
 &xy + \tfrac{a}{6}y^3,\quad z+\tfrac{a}{12}y^3,\\
 &x\big(z-\frac{xy}{2}\big)+\tfrac{a}2y^2z+\tfrac{a}{12}xy^3+\tfrac{a^2}{60}y^5,\\
 &y\big(z+\frac{xy}{2}\big)+\tfrac{a}{12}y^4,\\
 &z^2-\frac{x^2y^2}{4}+\tfrac{a}{6}y^3z+\tfrac{a^2}{360}y^6.
 \end{align*}
This set of functions can be viewed as homogeneous coordinates of the embedding $(M,\f)\to P^3$, whose osculating flag corresponds to the embedding $\varphi\colon (M,\f)\to \Flag(V,\phi)$ from Proposition~\ref{p:cayley}.

This example can viewed as a contact generalization of the ruled Cayley's ruled cubic surface in $P^3$, which is (up to projective transformations) the only non-degenerate surface in $P^3$ with $3$-dimensional symmetry algebra.   

\section{Simply transitive embeddings with non-vanishing~$\chi_1^R$ and~$\chi_1^S$}
\label{sec:nonvanishing}
In this and the next section we complete the classification of all embeddings $\varphi\colon (M,\f)\to \Flag((V,\phi)$ with simply transitive group of automorphisms. We assume that $\dim \Hf=3$, so $\Hf^0=0$ and $H$ is (locally) simply transitive on the base.

\subsection{General setup}\label{ss:setup}
Similar to $\chi$, define another function $\psi\colon \g_{-}\to \g$ the commutative diagram:
\[
\begin{tikzcd}
	T_{\mathring{z}}Q\arrow[rr, "\omega_I"]\arrow[dr, "\omega_{-}"]& & \g \\
	& \g_{-} \cong \g / \phi^0\g \arrow[ur, "\psi"]  & 
\end{tikzcd}
\]

From the definition
\[
\psi(v) \equiv v \mod \phi^0\g\quad \text{ for }v\in\g_{-}.
\]
Using the identification $T_{\mathring{z}}\cong \Hf$ define another Lie bracket 
\[
\gamma\colon\g_{-}\times\g_{-}\to\g_{-}
\]
on $\g_{-}$ and decompose it as $\gamma = \sum_{l=0}^2\gamma_l$, where $\gamma_0$ coincides with the standard Lie bracket on $\g_{-}$:
\[
\gamma_0 = e_0\otimes e_2^*\wedge e_1^*.
\]

Finally, let $\varphi = \psi + \chi$ be a map $\g_{-}\to \lf$. Then the condition $d\omega+\tfrac12[\omega,\omega]=0$ is equivalent to
\[
\tfrac12[\varphi, \varphi] = \varphi.\gamma. 
\]

So far we have defined the maps 
\begin{align*}
	\psi &\in \Hom(\g_{-},\g),\\
	\chi &\in \Hom(\g_{-},\g^\perp),\\
	\gamma &\in \Hom(\wedge^2 \g_{-},\g_{-})
\end{align*}
using a fixed point $\mathring{z}\in Q$ and the identification of $T_{\mathring{z}}Q$ with $\Hf$. 

Taking $\mathring{z}$ to be an arbitrary point of $Q$, we can view them as maps
\begin{align*}
	\psi\colon &Q\to \Hom(\g_{-},\g),\\
	\chi \colon &Q\to \Hom(\g_{-},\g^\perp),\\
	\gamma\colon &Q\to \Hom(\wedge^2 \g_{-},\g_{-}).
\end{align*}
Due to the equivariance of $\omega$, it is easy to see that all these maps are also equivariant under the action of $\phi^0G$:
\begin{align*}
	\psi(z.a) &= \rho(a)^{-1}	\psi(z),\\
	\chi(z.a) &= \rho(a)^{-1}	\chi(z),\\
	\gamma(z.a) &= \rho(a)^{-1}	\gamma(z),
\end{align*}
for any $z\in Q, a\in \phi^0G$, where by $\rho(a)$ we understand the corresponding (different) representations of $\phi^0 G$.

From the above equivariance identities, it follows that for any $A_p\in\g_p$, $p>0$ we have
\[
R_{\exp(A_p)}^*\psi = \psi - \partial A_p + (\text{terms of degree }\ge p+1),
\]
where $\partial\colon \g_p=\Hom(\g_{-},\g)_p$ is the cochain differential. Since
\[
\Hom(\g_{-},\g) = \im\partial \oplus \ker \partial^*,
\]
for any $z\in Q$ we can choose uniquely $a\in \phi^1 G$ so that $\partial^* \psi_{za} = 0$.

Similar to $\gamma$, we decompose $\psi$ and $\chi$ by degree using the grading of~$\g$:
\begin{align*}
	\psi &= \sum_{p\ge 0} \psi_p,\quad \psi_0 = \operatorname{id};\\
	\chi &= \sum_{p\ge 1} \chi_p,\quad \chi_1 = \xi_1^R + \xi_1^S = R_{-1,1}\otimes e_2^* + S_{1,-1} \otimes e_1^*. 
\end{align*} 

From the fundamental equation we have:
\begin{align*}
	&\partial \psi_n + \tfrac12 \sum_{\substack{i+j=n \\ i,j>0}} [\psi_i,\psi_j] + \tfrac12 \sum_{\substack{i+j=n \\ i,j>0}} [\chi_i,\chi_j]_{I} = \sum_{\substack{i+j=n \\ i\ge 0,\, j>0}} \psi_i\gamma_j;\\
	&\partial \chi_n + \sum_{\substack{i+j=n \\ i,j>0}} [\psi_i,\chi_j] + \tfrac12 \sum_{\substack{i+j=n \\ i,j>0}} [\chi_i,\chi_j]_{II} = \sum_{\substack{i+j=n \\ i,j>0}} \chi_i\gamma_j;\\
	\intertext{and}
	&\partial^* \psi_n = 0.
\end{align*}

We are going to determine $\{ \psi_n, \chi_n, \gamma_n \}$ inductively for $n\ge 1$. For convenience sake we represent $\gamma$ as:
\begin{align*}
	\gamma_1 &= (e_0) \begin{pmatrix} P_1 & P_2 \end{pmatrix} \otimes \begin{pmatrix} e_0^* \wedge e_1^* \\ e_0^* \wedge e_2^* \end{pmatrix}
	+ \begin{pmatrix} e_1 & e_2 \end{pmatrix} \begin{pmatrix} p_1 \\ p_2 \end{pmatrix} \otimes (e_1^*\wedge e_2^*),\\
	\gamma_2 &= \begin{pmatrix} e_1 & e_2 \end{pmatrix} \begin{pmatrix} Q_{11} & Q_{12} \\ Q_{21} & Q_{22} \end{pmatrix} \otimes \begin{pmatrix} e_0^* \wedge e_1^* \\ e_0^* \wedge e_2^* \end{pmatrix}.
\end{align*}
Similarly, we write $\psi$ as:
\begin{align*}
	\psi_1 &= \begin{pmatrix} e_1 & e_2 \end{pmatrix} \begin{pmatrix} U_1 \\ U_2 \end{pmatrix} \otimes e_0^* + \begin{pmatrix} H_1 & H_2 \end{pmatrix} \begin{pmatrix} u_{11} & u_{12} \\ u_{21} & u_{22} \end{pmatrix} \otimes 
	\begin{pmatrix} e_1^* \\ e_2^* \end{pmatrix},\\
	\psi_2 &= \begin{pmatrix} H_1 & H_2 \end{pmatrix} \begin{pmatrix} V_1 \\ V_2 \end{pmatrix} \otimes e_0^* + \begin{pmatrix} \check{e}_1 & \check{e}_2 \end{pmatrix} \begin{pmatrix} v_{11} & v_{12} \\ v_{21} & v_{22} \end{pmatrix} \otimes \begin{pmatrix} e_1^* \\ e_2^* \end{pmatrix},\\
	\psi_3 &= \begin{pmatrix} \check{e}_1 & \check{e}_2 \end{pmatrix} \begin{pmatrix} W_1 \\ W_2 \end{pmatrix} \otimes e_0^* + \check{e}_0 \begin{pmatrix} w_{1} & w_{2} \end{pmatrix} \otimes 
	\begin{pmatrix} e_1^* \\ e_2^* \end{pmatrix},\\
	\psi_4 &= \zeta \check{e}_0\otimes e_0^*. 
\end{align*}
See Section~\ref{sec:letoile} for definition of the basis $\{e_0, e_1, e_2, H_1, H_2, \check{e}_0, \check{e}_1, \check{e}_2\}$ of~$\g$. 
 
\subsection{Degree 1}\label{nv-deg1} The fundamental equations in degree 1 are reduced to:
\begin{align*}
	&\partial \psi_1 = \gamma_1,\\
	&\partial^*\psi_1 = 0.
\end{align*} 
Direct computation shows
\begin{multline*}
	\partial \psi_1 = (e_0) \begin{pmatrix} U_2+u_{11}+u_{21} & -U_1+u_{12}+u_{22} \end{pmatrix} \otimes \begin{pmatrix} e_0^* \wedge e_1^* \\ e_0^* \wedge e_2^* \end{pmatrix} \\
	+ \begin{pmatrix} e_1 & e_2 \end{pmatrix} \begin{pmatrix} U_1+2u_{12}-u_{22} \\ U_2+u_{11}-2u_{21} \end{pmatrix} \otimes (e_1^*\wedge e_2^*)
\end{multline*}
and 
\[
\partial^* \psi_1 = \begin{pmatrix} \check{e_1} & \check{e_2}\end{pmatrix}\begin{pmatrix} U_2 \\ - U_1 \end{pmatrix} + \diag \begin{pmatrix} -2 & 1 \\ 1 & -2 \end{pmatrix}  
\begin{pmatrix} u_{11} & u_{12} \\ u_{21} & u_{22} \end{pmatrix}.
\]
So, we get:
\begin{align*}
	U_1 &= -\tfrac13(P_2-p_1),\\
	U_2 &= \tfrac13(P_1+p_2),\\
	\begin{pmatrix} u_{11} & u_{12} \\ u_{21} & u_{22} \end{pmatrix} &= \frac13 \begin{pmatrix} P_1 & P_2+p_1 \\ P_1-p_2 & P_2 \end{pmatrix}.
\end{align*}

Using the action of $\Aut_{+}(\g_{-})$ on $\psi$, we can always normalize $U_1=U_2=0$, which implies $p_1=P_2$ and $p_2=-P_1$.

\subsection{Degree 2}\label{nv-deg2} The fundamental equations in degree 2 are:
\begin{align}
	\label{deg2.1}	&\partial \chi_2 + [\psi_1,\chi_1]+\tfrac12 [\chi_1,\chi_1]_{II} = \chi_1\gamma_1,\\
	\label{deg2.2}	&\partial \psi_2 + \tfrac12[\psi_1, \psi_1]+\tfrac12[\chi_1,\chi_1]_I=\psi_1\gamma_1+\psi_0\gamma_2,\\
	\label{deg2.3}	&\partial^*\psi_2 = 0.
\end{align}

The terms of equation~\eqref{deg2.1} are
\begin{align*}
	\partial\chi_2 &= 4\begin{pmatrix} R_{-1,1} & S_{1,-1} \end{pmatrix} \begin{pmatrix} h_2^R \\ -h_2^S \end{pmatrix} e_1^*\wedge e_2^*,\\
	[\psi_1, \chi_1] &= \begin{pmatrix} U_1S_{0,-1} & U_2R_{-1,0} \end{pmatrix} \begin{pmatrix} e_0^* \wedge e_1^* \\ e_0^* \wedge e_2^* \end{pmatrix}
	-\begin{pmatrix} R_{-1,1} & S_{1,-1} \end{pmatrix} \begin{pmatrix} p_2 \\ p_1 \end{pmatrix} e_1^*\wedge e_2^*,\\
	[\chi_1,\chi_1]_{II} &= 0,\\
	\chi_1\gamma_1 &= \begin{pmatrix} R_{-1,1} & S_{1,-1} \end{pmatrix} \begin{pmatrix} p_2 \\ p_1 \end{pmatrix} e_1^*\wedge e_2^*.
\end{align*}
This implies that
\begin{align*}
	h_2^R &= \tfrac12 p_2 = -\tfrac12 P_1, \\
	h_2^S &= -\tfrac12 p_1 = -\tfrac12 P_2. 
\end{align*}

Next, the terms of equation~\eqref{deg2.2} are:
\begin{align*}
	\partial \psi_2 &= \begin{pmatrix} e_1 & e_2 \end{pmatrix} 
	\begin{pmatrix} -v_{21}-2V_1+V_2 & -v_{22} \\ v_{11} & v_{12}+V_1-2V_2 \end{pmatrix}\begin{pmatrix} e_0^*\wedge e_1^* \\ e_0^*\wedge e_2^*\end{pmatrix} \\ 
	& \qquad\qquad\qquad\qquad\qquad\qquad + \begin{pmatrix} H_1 & H_2 \end{pmatrix} \begin{pmatrix} V_1-v_{12} \\ V_2+v_{21} \end{pmatrix} e_1^*\wedge e_2^*, \\
	\tfrac12[\psi_1,\psi_1] &= 0 ,\\
	\tfrac12[\chi_1,\chi_1]_{I} &= 12\begin{pmatrix} H_1 & H_2 \end{pmatrix} \begin{pmatrix} -1 \\ 1\end{pmatrix} e_1^*\wedge e_2^*,\\
	\psi_1\gamma_1 &= \tfrac13p_1p_2 \begin{pmatrix} H_1 & H_2 \end{pmatrix} \begin{pmatrix} 1 \\ -1\end{pmatrix} e_1^*\wedge e_2^*,\\
	\psi_0\gamma_2 &= \begin{pmatrix} e_1 & e_2 \end{pmatrix} \begin{pmatrix} Q_{11} & Q_{12} \\ Q_{21} & Q_{22} \end{pmatrix} \otimes \begin{pmatrix} e_0^* \wedge e_1^* \\ e_0^* \wedge e_2^* \end{pmatrix}.
\end{align*}
Finally,
\[
\partial^* \psi_2 = (-(V_1+V_2)+v_{21}-v_{12})\check{e}_0.
\]

Solving these equations, we get
\begin{align}
	\label{vij} \begin{pmatrix} v_{11} & v_{12} \\ v_{21} & v_{22} \end{pmatrix} 
	&= 
	\begin{pmatrix} Q_{21} & -\frac14(3C+Q_{11}) \\
		-\frac14(3C+Q_{11}) & -Q_{12} \end{pmatrix},
	\\ \label{Vi}
	\begin{pmatrix} V_1 \\ V_2 \end{pmatrix} &= \tfrac14(C-Q_{11})\begin{pmatrix} 1 \\ -1 \end{pmatrix},
\end{align}
where 
\begin{equation}\label{eqC}
	C=12+\tfrac13p_1p_2=12-\tfrac13P_1P_2
\end{equation}
and in addition we have $Q_{11}+Q_{22}=0$.

This completes the computation of degree 2.

\subsection{Degree 3} The fundamental equations in degree 3 are:
\begin{align}
	\label{deg3.1}	&\partial \chi_3 + [\psi_2,\chi_1]+[\psi_1,\chi_2] = \chi_2\gamma_1+\chi_1\gamma_2,\\
	\label{deg3.2}	&\partial \psi_3 + [\psi_1, \psi_2]+[\chi_1,\chi_2]_I=\psi_2\gamma_1+\psi_1\gamma_2.
\end{align}

Note that starting from degree $n\ge 3$ the equations $\partial^*\psi_n = 0$ become trivial as $\g_n=0$ for $n\ge 3$.

Computing the individual terms of equation~\eqref{deg3.1} we get:
\begin{align*}
	\partial\chi_3 &= 
	\begin{pmatrix} R_{-1,1} & S_{1,-1} \end{pmatrix} 
	\begin{pmatrix} -h^R_3 & 0 \\ 0 & h_3^S \end{pmatrix}
	\begin{pmatrix} e_0^* \wedge e_1^* \\ e_0^* \wedge e_2^* \end{pmatrix} + 	
	3\begin{pmatrix} R_{0,1} & S_{1,0} \end{pmatrix} \begin{pmatrix} h_3^R \\ -h_3^S \end{pmatrix} e_1^*\wedge e_2^*,\\
	[\psi_2, \chi_1] &= \tfrac32(C-Q_{11}) \begin{pmatrix} R_{-1,1} & S_{1,-1} \end{pmatrix} \begin{pmatrix} 0 & -1 \\ 1 & 0 \end{pmatrix} \begin{pmatrix} e_0^* \wedge e_1^* \\ e_0^* \wedge e_2^* \end{pmatrix} 
	+ 3\begin{pmatrix} R_{0,1} & S_{1,0} \end{pmatrix} \begin{pmatrix} Q_{21} \\ Q_{12} \end{pmatrix} e_1^*\wedge e_2^*,\\
	[\psi_1, \chi_2] &= \tfrac12\begin{pmatrix} R_{-1,1} & S_{1,-1} \end{pmatrix} \begin{pmatrix} p_2^2 & p_1p_2 \\ -p_1p_2 & -p_1^2 \end{pmatrix} \begin{pmatrix} e_0^* \wedge e_1^* \\ e_0^* \wedge e_2^* \end{pmatrix} 
	+ \tfrac32\begin{pmatrix} R_{0,1} & S_{1,0} \end{pmatrix} \begin{pmatrix} -p_2^2 \\ p_1^2 \end{pmatrix} e_1^*\wedge e_2^*,\\
	\chi_2\gamma_1 &= \tfrac12\begin{pmatrix} R_{-1,1} & S_{1,-1} \end{pmatrix} \begin{pmatrix} -p_2^2 & p_1p_2 \\ -p_1p_2 & p_1^2 \end{pmatrix} \begin{pmatrix} e_0^* \wedge e_1^* \\ e_0^* \wedge e_2^* \end{pmatrix} 
	+ \tfrac32\begin{pmatrix} R_{0,1} & S_{1,0} \end{pmatrix} \begin{pmatrix} p_2^2 \\ -p_1^2 \end{pmatrix} e_1^*\wedge e_2^*,\\
	\chi_1\gamma_2 &= \tfrac12\begin{pmatrix} R_{-1,1} & S_{1,-1} \end{pmatrix} \begin{pmatrix} Q_{21} & Q_{22} \\ Q_{11} & Q_{12} \end{pmatrix} \begin{pmatrix} e_0^* \wedge e_1^* \\ e_0^* \wedge e_2^* \end{pmatrix}.
\end{align*}
Thus, we get
\begin{align*}
	\begin{pmatrix} -h_3^R+\tfrac12p_2^2 & -\tfrac32(C-Q_{11})+\tfrac12p_1p_2 \\ \tfrac32(C-Q_{11})-\tfrac12p_1p_2 & h_3^S - \tfrac12p_1^2 \end{pmatrix}
	&= \begin{pmatrix} -\tfrac12p_2^2 + Q_{21} & \tfrac12p_1p_2+Q_{22}\\ -\tfrac12p_1p_2+Q_{11} & \tfrac12p_1^2+Q_{12}\end{pmatrix},\\
	\begin{pmatrix} 3h_3^R +3Q_{21}-\tfrac32p_2^2 \\ -3h_3^S +3Q_{12}+\tfrac32p_1^2 \end{pmatrix}
	&= \begin{pmatrix} \tfrac32p_2^2 \\ -\tfrac32p_1^2 \end{pmatrix}.
\end{align*}
It is easy to see that the second of these equations is a consequence of the first one. Solving the first equation, we get:
\begin{align}
	\label{Qij} & Q_{11} = \tfrac35 C,\quad Q_{22} = -\tfrac35 C,\\
	& h_3^R = p_2^2 - Q_{21} = P_1^2 - Q_{21},\\
	& h_3^S = p_1^2 + Q_{12} = P_2^2 + Q_{12}. 
\end{align}

Similarly, computing the terms of equation~\eqref{deg3.2}, we get
\begin{align*}
	\partial \psi_3 &= \begin{pmatrix} H_1 & H_2 \end{pmatrix} 
	\begin{pmatrix} W_1-w_1 & -w_2 \\ -w_1 & W_2-w_2 \end{pmatrix}\begin{pmatrix} e_0^*\wedge e_1^* \\ e_0^*\wedge e_2^*\end{pmatrix} + \begin{pmatrix} \check{e}_1 & \check{e}_2 \end{pmatrix} \begin{pmatrix} W_1+w_1 \\ W_2+w_2 \end{pmatrix} e_1^*\wedge e_2^*, \\
	[\psi_1,\psi_2] &= \begin{pmatrix} \check{e}_1 & \check{e}_2 \end{pmatrix} \begin{pmatrix} -P_1Q_{21} \\ P_2Q_{12} \end{pmatrix},\\
	[\chi_1,\chi_2]_{I} &= 6\begin{pmatrix} H_1 & H_2 \end{pmatrix} 
	\begin{pmatrix} P_2 & -P_1 \\ -P_2 & P_1 \end{pmatrix}\begin{pmatrix} e_0^*\wedge e_1^* \\ e_0^*\wedge e_2^*\end{pmatrix} + 18\begin{pmatrix} \check{e}_1 & \check{e}_2 \end{pmatrix} \begin{pmatrix} P_2 \\ P_1 \end{pmatrix} e_1^*\wedge e_2^*, \\
	\psi_1\gamma_2 &= \tfrac13 \begin{pmatrix} H_1 & H_2 \end{pmatrix} 
	\begin{pmatrix} -P_2 & 2P_1 \\ -2P_2 & P_1 \end{pmatrix}
	\begin{pmatrix} Q_{11} & Q_{12} \\ Q_{21} & Q_{22} \end{pmatrix} 
	\begin{pmatrix} e_0^*\wedge e_1^* \\ e_0^*\wedge e_2^*\end{pmatrix},\\
	\psi_2\gamma_1 &= \begin{pmatrix} H_1 & H_2 \end{pmatrix} 
	\begin{pmatrix} P_1V_1 & P_2V_1 \\ P_1V_2 & P_2V_2 \end{pmatrix}
	\begin{pmatrix} e_0^*\wedge e_1^* \\ e_0^*\wedge e_2^*\end{pmatrix} + \begin{pmatrix} \check{e}_1 & \check{e}_2 \end{pmatrix} 
	\begin{pmatrix} v_{11} & v_{12} \\ v_{21} & v_{22} \end{pmatrix}
	\begin{pmatrix} P_1 \\ P_2 \end{pmatrix} e_1^*\wedge e_2^*,
\end{align*}
where $V_i$ and $v_{ij}$ were expressed via $C$ and $Q_{ij}$ in the degree 2 computation (see~\eqref{vij} and~\eqref{Vi}). Taking into account~\eqref{Qij}, we get
\begin{align*}
	& V_1 = \tfrac{C}{10}, \quad V_2 = -\tfrac{C}{10}, \\ 
	& v_{12} = v_{21} = -\tfrac{9C}{10}.
\end{align*}
Using this and the above expressions for the terms of equation~\eqref{deg3.2}, we get
\begin{equation}\label{eqsPQ}
	\begin{aligned}
		P_1Q_{11} + P_2Q_{21} &= 0,\\
		P_1Q_{12} + P_2Q_{22} &= 0
	\end{aligned}
\end{equation}
and
\begin{align*}
	w_1 &= P_1(6-\tfrac{C}{10}),\\
	w_2 &= P_2(-6+\tfrac{C}{10}),\\
	W_1 &= P_1(12-\tfrac{C}{5}),\\
	W_2 &= P_2(-12+\tfrac{C}{5}).
\end{align*}

\subsection{Degree 4} 
Proceeding as before, from the equation involving $\partial\chi_4$ we get:
\begin{align*}
	h_4^R &= P_1(-2P_1^2+5Q_{21}),\\
	h_4^S &= P_2(-2P_2^2-5Q_{12}).
\end{align*}

Similarly, from the equation on $\partial\psi_4$ we have
\begin{align}
	\label{eqQ21}	& Q_{21}\big(\tfrac{3}{5}C+12\big)+P_1^2\big(\tfrac{1}{5}C-24\big)=0,\\
	\label{eqQ12}	& Q_{12}\big(\tfrac{3}{5}C+12\big)-P_2^2\big(\tfrac{1}{5}C-24\big)=0,\\
	& \zeta = P_1P_2\big(33-\tfrac23C\big)+Q_{12}Q_{21}+\big(\tfrac9{10}C\big)^2.
\end{align}
Multiplying the first equation by $P_2$, the second equation by $P_1$, taking into account equations~\eqref{eqsPQ} and the fact that $P_1P_2=3(12-C)$ (see~\eqref{eqC}), we get
\begin{align*}
	P_1(C-15)(C-60)&=0,\\
	P_2(C-15)(C-60)&=0.
\end{align*}

Thus, we can distinguish three cases:
\begin{enumerate}
	\item[(i)] $P_1=P_2=0$ and $C=12$;
	\item[(ii)] $C=15$, $P_1P_2=-9$; 
	\item[(iii)] $C=60$, $P_1P_2=-144$.
\end{enumerate}

In all three cases equations \eqref{eqQ21}, \eqref{eqQ12} allow to determine $Q_{12}$ and $Q_{21}$ uniquely from $C$ and $P_1$, $P_2$:
\begin{enumerate}
	\item[(i)] $Q_{12}=Q_{21}=0$;
	\item[(ii)]  $Q_{21}=P_1^2$, $Q_{12}=-P_2^2$;
	\item[(iii)] $Q_{21}=\tfrac14 P_1^2$, $Q_{12}=-\tfrac14 P_2^2$.
\end{enumerate}
We note also that \eqref{Qij} determines uniquely $Q_{11}$ and $Q_{22}$ in terms of $C$.

\subsection{Degrees 5 and 6} 
From the equation on $\partial\chi_5$ we get:
\begin{align*}
	h_5^R &= -2\big(-2P_1^4+8P_1^2Q_{21}-3Q_{21}^2\big),\\
	h_5^S &= -2\big(2P_2^4+8P_2^2Q_{12}+3Q_{12}^2\big).
\end{align*}

Direct and lengthy computation shows that the equations on  $\partial\psi_5$ and $\partial\psi_6$ are automatically satisfied.

\emph{
	So, to sum up, the forms $\gamma$, $\psi$, $\chi$ that determine the osculating map $\varphi\colon (M,\f)\to \Flag(V,\phi)$ are expressed via four variables $P_1$, $P_2$, $Q_{12}$ and $Q_{21}$. The possible values of these variables split into three cases:
	\begin{enumerate}
		\item[($II_0$)] $P_1=P_2=0$, $Q_{12}=Q_{21}=0$ (corresponds to $C=12$);
		\item[($II_1$)] $P_1P_2=-9$, $Q_{21}=P_1^2$, $Q_{12}=-P_2^2$ (corresponds to $C=15$);
		\item[($II_2$)] $P_1P_2=-144$, $Q_{21}=\tfrac14 P_1^2$, $Q_{12}=-\tfrac14 P_2^2$  (corresponds to $C=60$).
	\end{enumerate}
}

Tensor $\gamma$ defines a new Lie algebra structure on $\g_{-}$ (viewed as a vector space), which is isomorphic to the symmetry algebra of $(P,\omega)$, or of the embedded manifold $\varphi(M)\subset \Flag(V,\phi)$. To distinguish this new Lie algebra structure from the graded Lie algebra structure on $\g_{-}$, we use the notation $\{Z_0, Z_1, Z_2\}$ for the basis of the symmetry algebra that corresponds to the basis $\{e_0,e_1,e_2\}$ of $\g_{-}$.

Explicitly, it has the following bracket relations in each of these cases:
\begin{enumerate}
	\item[($II_0$)] $[Z_0,Z_1]=\tfrac{36}{5}Z_1$, $[Z_0,Z_2]=-\tfrac{36}{5}Z_2$, $[Z_1,Z_2]=-Z_0$;
	\item[($II_1$)] $[Z_0,Z_1]=P_1(Z_0 - P_2 Z_1 + P_1 Z_2)$, $[Z_0,Z_2]=P_2(Z_0 - P_2 Z_1 + P_1 Z_2)$, $[Z_1,Z_2]= -Z_0 + P_2Z_1 - P_1Z_2$ ($P_1P_2=-9$);
	\item[($II_2$)] $[Z_0,Z_1]=P_1 (Z_0 - \tfrac{P_2}{4} Z_1 + \tfrac{P_1}{4} Z_2)$, $[Z_0,Z_2]=P_2 (Z_0 - \tfrac{P_2}{4} Z_1 + \tfrac{P_1}{4} Z_2)$, $[Z_1,Z_2]= -Z_0 + P_2Z_1 - P_1Z_2$ ($P_1P_2=-144$).
\end{enumerate}

The first of these Lie algebras is isomorphic to $\fsl(2,\R)$, while the other two are solvable with the 2-dimensional abelian derived algebra spanned by $e_0$ and $P_2 e_1 - P_1 e_2$. Note that Lie algebra structures for $(II_1)$ and $(II_2)$ are non-isomorphic, as in $(II_1)$ the Killing form vanishes identically, while in $(II_2)$ it has rank $1$. 

\subsection{Corresponding systems of PDEs}
Using the explicit description of $\om$ in terms of $\varphi=\psi+\chi$, we can also recover the corresponding systems of PDEs via the equation:
\begin{equation}\tag{\amgiS}
	d\eta + \om\cdot\eta = 0. 
\end{equation}

Let us see it more precisely for the case ($II_0$) and give a group theoretic interpretation of the associated differential equation. We have:
\begin{align*}
	\varphi(Z_0) &= e_0 + \tfrac{6}{5} H_1 - \tfrac{6}{5} H_2 + \left(\tfrac{54}{5}\right)^2 \check e_0,\\
	\varphi(Z_1) &= e_1 - \tfrac{54}{5} \check e_2 + S_{1,-1}, \\
	\varphi(Z_2) &= e_2 - \tfrac{54}{5} \check e_1 + R_{-1,1}.
\end{align*}

These formulas define an embedding of the Lie algebra $\fsl(2)$ to $\fso(5,3)\subset \fsl(8)$ and, thus, define the structure of $\fsl(2)$-module on $\R^8$. To identify the decomposition of this module into the irreducible submodules, it is sufficient to inspect the eigenvalues of the image of $Z_0$, which spans the Cartan subalgebra in $\fsl(2)$. Direct computation shows that $\varphi(Z_0)$ has the following eigenvalues:
\[
\left(\tfrac{108}{5}, \tfrac{72}{5}, \tfrac{36}{5}, 0, 0, -\tfrac{36}{5}, -\tfrac{72}{5}, -\tfrac{108}{5}\right), 
\]
which up to the constant are equal to $(6,4,2,0,0,-2,-4,-6)$. This implies that $\R^8$ is decomposed into the sum of two irreducible $\fsl(2)$-submodules of dimensions $7$ and $1$. 

The osculating embedding in this case can be viewed purely in terms of the representation theory of $\sll(2)$. Namely, the osculating embedding under consideration is an osculating map for some equivariant embedding $SL(2,\R)\to P^7$, where the action of $SL(2,\R)$ on $P^7$ is determined from the decomposition of $\R^8=V_7+V_1$ into two irreducible modules of dimensions $7$ and $1$. Due to equivariancy the embedding $SL(2,\R)\to P^7$ is uniquely determined by its value at the identity, which is a certain 1-dimensional subspace $V^{-1}\subset V$. 

Let $\{e_0,\dots,e_6\}$ be the standard basis of the corresponding $\sll(2,\R)$-module $V_7$ and let $f$ be the basis element in $V_1$ equipped with the trivial action of $\sll(2)$. Define $V^{-1}$ as:
\[
V^{-1} = \langle e_0 + \sqrt{10}\,e_3 + e_6 + f\rangle.
\]
Define also the contact filtration on $\fsl(2)$ by $\fsl(2)^{-1}=\langle e_1, e_2\rangle$, which corresponds to the subspace $\left\{\left(\begin{smallmatrix} 0 & x \\ y & 0 \end{smallmatrix}\right)\right\}$. 

This extends to the filtration of $V$ by $V^{-i-1}=V^{-i}+\fsl(2)^{-1}V^{-i}$. It is easy to check that
\[ 
\dim V^{-2}=3,\ \dim V^{-3}=5,\ \dim V^{-4}=7,\ \dim V^{-5}=8,
\] 
so $V^{-5}=V$. These dimensions correspond the symbol $\fsl_3$ of the embedding. One can check that these conditions uniquely determine the subspace $V^{-1}$ up to equivalence.

Finally, let us write the corresponding system of PDEs explicitly in some natural coordinate system on $SL(2,\R)$. Denote also by $Z^*_i$ ($i=0,\dots,2$), the left-invariant vector field on the symmetry group $H$ corresponding to the basis element $Z_i$. Then by definition we have $\om(Z^*_i)=\phi(Z_i)$. For simplicity, we shall use the same notation $Z_i$ for both basis elements in the symmetry algebra $\mathfrak{H}$ and the corresponding left-invariant vector fields $Z_i^*$ on $H$. 

Equation~(\amgiS) takes the form:
\[
Z_i\eta + \phi(Z_i)\eta = 0,\quad i=0,1,2.
\]
Using the above formulas for $\phi(e_i)$, we derive the following system of PDEs:
\begin{align*}
	Z_1^2u &= -6Z_2u,\\
	Z_2^2u &= 6Z_1u,
\end{align*}
where $u$ is the last coordinate of $\eta$ in the basis $\{A_1,\dots,A_8\}$.

The group $H$ is locally isomorphic to $SL(2,\R)$, which acts locally simply transitively on the projectivized cotangent bundle to $P^1\times P^{1,*}$.  We can choose local coordinate system $(x,y,z)$ on $H$ such that up to non-zero scales vector fields $Z_1, Z_2, Z_0$ have the form:
\begin{align*}
	Z_1 &= \partial_x + y^2\partial_y + y\partial_z,\\
	Z_2 &= x^2\partial_x + \partial_y - x\partial_z,\\
	Z_0 &= - [Z_1, Z_2] = -2(x\partial_x - y \partial_y + \partial_z).
\end{align*}
Here $x$ and $y$ are affine coordinates on the two copies of $P^1$, and $z=\log (dy/dx)$. Then, due to the scaling factors, the above system of PDEs is transformed to the following one:
\begin{equation}\label{eq-case1}
	\begin{aligned}
		Z_1^2u + \sqrt{10}\, Z_2 u &= 0,\\
		Z_2^2u - \sqrt{10}\, Z_1 u &= 0.
	\end{aligned}
\end{equation}
It has an 8-dimensional solution space which can be described as follows. First, note that the Lie algebra of right-invariant vector fields on $SL(2,\R)$ is spanned in the chosen coordinate system by the following vector fields:
\begin{align*}
	Z_1' &= e^{z} \Big((xy+1)\partial_y + x\partial_z\Big),\\ 
	Z_2' &= e^{-z} \Big(-(xy+1)\partial_x + y\partial_z\Big),\\
	Z_0'&=  \partial_z.
\end{align*}
It is clear that it lies in the symmetry algebra of~\eqref{eq-case1} and thus preserves its solution space. Explicit computation shows that the solution space of~\eqref{eq-case1} is spanned by constants and the following 7-dimensional vector space invariant with respect to the action of $Z_0', Z_1', Z_2'$:
\[
\quad (Z_2')^k \left[\frac{x^6+\sqrt{10}x^3+1}{(xy+1)^3} e^{3z}\right], \quad k=0,\dots,6. 
\]
This reconfirms the decomposition of the action of $\mathfrak{H}$ on $\R^8$ into the sum of 1-dimensional and 7-dimensional irreducible subspaces. 

Cases ($II_1$) and ($II_2$) can be treated in a similar manner. This results in the following systems of PDEs.

Case ($II_1$):
\begin{align*}
	(Z_1-P_1)^2u &= -6(Z_2-P_2)u + (P_1^2+3P_2)u,\\
	(Z_2-P_2)^2u &= 6(Z_1-P_1)u+(P_2^2-3P_1)u,
\end{align*}
where $P_1P_2=-9$.

Case ($II_2$):
\begin{align*}
	(Z_1-P_1)^2u &= -6(Z_2-P_2)u + (\tfrac{1}{4}P_1^2+3P_2)u,\\
	(Z_2-P_2)^2u &= 6(Z_1-P_1)u+(\tfrac{1}{4}P_2^2-3P_1)u,
\end{align*}
where $P_1P_2=-144$.

\section{Simply transitive embeddings with only one non-vanishing $\chi_1^R$ or $\chi_1^S$}\label{s:vanishing}
Without loss of generality we can assume that $\chi_1^S=0$ and $\chi_1^R\ne0$. From Proposition~\ref{hRS} we have $h_k^S=0$ for all $k=1,\dots,5$.

As in Subsection~\ref{ss:setup} we define
\begin{align*}
	\psi\colon &Q\to \Hom(\g_{-},\g);\\
	\chi \colon &Q\to \Hom(\g_{-},\g^\perp);\\
	\gamma\colon &Q\to \Hom(\wedge^2 \g_{-},\g_{-}),
\end{align*}
decompose them as:
\begin{align*}
	\psi &= \sum_{p\ge 0} \psi_p,\quad \psi_0 = \operatorname{id};\\
	\chi &= \sum_{p\ge 1} \chi_p,\quad \chi_1 = \xi_1^R + \xi_1^S = R_{-1,1}\otimes e_2^* + S_{1,-1} \otimes e_1^*;\\
	\gamma &= \sum_{p\ge 0} \gamma_p,\quad \gamma_0 =  e_0\otimes e_2^*\wedge e_1^*,
\end{align*}
and proceed determining $\{ \psi_n, \chi_n, \gamma_n \}$ inductively for $n\ge 1$.

\subsection{Degree 1} Here the computation is identical to the non-vanishing case (see Subsection~\ref{nv-deg1}), except that we have $h_1^R=1$, $h_1^S=0$. The coefficients $U_i$ and $u_{ij}$ are completely determined by $p_i$ and $P_j$ by:
\begin{align*}
	U_1 &= -\tfrac13(P_2-p_1),\\
	U_2 &= \tfrac13(P_1+p_2),\\
	\begin{pmatrix} u_{11} & u_{12} \\ u_{21} & u_{22} \end{pmatrix} &= \frac13 \begin{pmatrix} P_1 & P_2+p_1 \\ P_1-p_2 & P_2 \end{pmatrix}.
\end{align*}

As in Subsection~\ref{nv-deg1}, we can assume $U_1=U_2=0$ and thus $p_1=P_2$, $p_2=-P_1$.

\subsection{Degree 2} 
As in the non-vanishing case (see Subsection~\ref{nv-deg2}), we have:
\[
h_2^R = -\tfrac{1}{2}P_1,
\]
and the coefficient $h_2^S$ vanishes due to Proposition~\ref{hRS}.

Next, we compute coefficients $v_{ij}$ and $V_i$ in terms of $P_j$, $Q_{ij}$:
\begin{align*}
v_{11} &= Q_{21},\\
v_{22} &= -Q_{12},\\
v_{12} &= v_{21}=\tfrac{1}{4}(P_1P_2-Q_{11}),\\
V_{2} &= -V_{1} = \tfrac{1}{12}(P_1P_2+3Q_{11}).
\end{align*}
In addition, as in Subsection~\ref{nv-deg2} we also have $Q_{11}+Q_{22}=0$.

\subsection{Degree 3 and higher}\label{ss:vanishing:deg3} Further computation shows that:
\begin{align*}
	h_3^R &= P_1^2-Q_{21},\\
	Q_{11} &= - Q_{22} = -\tfrac{1}{5}P_1P_2.
\end{align*}
The parameters $w_i$, $W_j$ are explicitly computed as:
\begin{align*}
w_1 &= \tfrac{1}{10}P_1^2P_2-\tfrac{1}{3}P_2Q_{21},\\
w_2 &= -\tfrac{1}{10}P_1P_2^2-\tfrac{1}{3}P_1Q_{12}
\end{align*}
 and
\begin{align*}
W_1 &= \tfrac{1}{3}P_2Q_{21},\\
W_2 &= \tfrac{1}{3}P_1Q_{12}.
\end{align*}

Furthermore, we get the following equations on $Q_{12}$ and $Q_{21}$:
\begin{align*}
P_2(P_1^2-5Q_{21})&=0,\\
P_1(P_2^2+5Q_{12})&=0.
\end{align*}
So, assuming that both $P_1$ and $P_2$ do not vanish, we can express $Q_{12}$ and $Q_{21}$ via $P_1$, $P_2$. However, proceeding with degree 4 computations, we get $P_1^3P_2=P_1P_2^3=0$, which contradicts to the assumption that both $P_1\ne 0$ and $P_2\ne 0$. 

So, we now have to consider three subcases.

\subsubsection*{Subcase 1. $P_1=0$, $P_2\ne0$} This implies immediately that $Q_{21}=0$, which completes the analysis of all parameters of degree 3. 

In degree 4 we get $h_4^R=\zeta=0$. From equations in degree 5 we also get $h_5^R=0$. And we have no further restrictions in degree 6. Thus, we get a family of embeddings parametrizied by $P_2$ and $Q_{12}$.

\subsubsection*{Subcase 2. $P_1\ne 0$, $P_2=0$} This implies immediately that $Q_{12}=0$, which completes the analysis of all parameters of degree 3. In degree 4 we get:
\begin{align*}
	\zeta &= 0,\\
	h_4^R &= P_1(5Q_{21}-2P_1^2).
\end{align*}
Proceeding to degree 5, we get:
\[
	h_5^R = 4P_1^4-16P_1^2Q_{21}+6Q_{21}^2.
\]
Finally, computations in degree 6 lead to the following equation:
\[
	(P_1^2-Q_{21})(P_1^2-4Q_{21})=0.
\]
So, we get $Q_{21}=P_1^2$ or $Q_{21}=\frac{1}{4}P_1^2$.  Thus, we get a family of embeddings parametrizied by $P_1$. This completes the analysis of this subcase. 

\subsubsection*{Subcase 3. $P_1=P_2=0$} We immediately get $h_4^R=0$ and 
\[
	\zeta=Q_{12}Q_{21}. 
\]
Proceeding to degree 5, we find $h_5^R=6Q_{21}^2$. Finally, degree 6 equations imply that $Q_{21}=0$ and thus we get $\zeta=h_5^R=0$. Thus, we arrive at the same equations as in Subcase 1 ($P_1=0$), but with extra relation $P_2=0$.

\subsection{Special values of parameters}
If parameters $P_1,P_2,Q_{12}$ vanish identically, this leads us exactly to the model with transitive symmetry algebra as defined in Proposition~\ref{p:cayley}. However, this is not the only case, when this may happen. Namely, let us determine under which conditions of parameters the resulting 3-dimensional subalgebra $\Hf$ may be included into a bigger subalgebra $\widehat \Hf$ such that $\Hf+\g=\widehat \Hf+\g$ and they define the same structure function $\chi\in\Hom(\g_{-},\g^\perp)$.

From Subsection~\ref{ss_stab} we know that this is possible only if $\dim \widehat\Hf=4$ and up to $\exp(\g_{+})$ the subalgebra $\widehat\Hf$ is conjugate to the symmetry algebra of the contact Cayley surface given in Proposition~\ref{p:cayley}:
\[
\langle e_0, e_1, e_2+R_{-1,1}, 5 H_1 + 4 H_2 \rangle
\] 

In particular, choosing an arbitrary 3-dimensional subalgebra in this symmetry algebra complementary to the stabilizer $\langle 5H_1+4H_2\rangle $, up to the action of $\exp(\g_{+})$ we get one of the subcases classified in Subsection~\ref{ss:vanishing:deg3}. 

Elementary calculations show that there are two 1-parameter families of such subalgebras:
\[
\langle e_0, e_1+\alpha(5 H_1 + 4 H_2), e_2+R_{-1,1}\rangle,\quad \alpha\in \R
\]
and
\[
\langle e_0, e_1, e_2+R_{-1,1}+\beta(5 H_1 + 4 H_2)\rangle,\quad \beta\in \R.
\]
To identify them among the cases of Section~\ref{s:vanishing}, it is sufficient to check if any of the 3-dimensional subalgebras $\Hf\subset \fso(5,3)$ determined in subcases 1,2 and 3 of Subsection~\ref{ss:vanishing:deg3} can be complemented to a 4-dimensional subalgebra by an element $H$ of the form $\exp(\g_{+})(5H_1+4H_2)$, or, explicitly:
\[
H = 5H_1 + 4H_2 + a_1 \check e_1 + a_2 \check e_2 + a_0 \check e_0
\]
for some constants $a_0,a_1,a_2\in\R$.

Simple calculation shows that:
\begin{itemize}
	\item in subcases 1 and 3 (treated together) such element $H$ exists if and only if $Q_{12}+\tfrac{P_2^2}{25}=0$;
	\item in subcase 2 such element $H$ exists if and only if $Q_{21}=\tfrac{1}{4}P_1^2$. 
\end{itemize}
The explicit form of such element $H$ is given in Section~\ref{s:summary}.
So, we can exclude these cases from the final list of results.

We note that the normalization condition $h_1^R=1$ still leaves a freedom in scaling of the remaining parameters that results in the action of the following one-parameter subgroup in $SO(5,3)$:
\[
\diag(t^3,t^2,t,1,1,1/t,1/t^2,1/t^3), \quad t\in\R^* 
\]
in the basis $A_1,\dots,A_8$. It acts on the parameters $P_1$, $P_2$, and $Q_{12}$ as follows:
\[
(P_1,P_2,Q_{12})\mapsto (t^2P_1, tP_2, t^2Q_{12})).
\]  
So, in combined cases 1 and 3 we can assume that $Q_{12}+\tfrac{P_2^2}{25}\ne 0$ and normalize this expression to $\pm 1$. Similarly, in case 2 we assume that $P_1\ne 0$ and also normalize it to $\pm1$.

The corresponding symmetry algebras and systems of PDEs are given in Section~\ref{s:summary}.  

\section{Summary of the results}
\label{s:summary}
We use the following notation in this section:
\begin{itemize}
	\item The basis of the symmetry algebra is denoted by $\{Z_0,Z_1,Z_2\}$ in simply transitive cases (all cases except $(O)$ and $(I_0)$). In case $(I_0)$ the basis is $\{H,Z_0,Z_1,Z_2\}$, where $\{H\}$ is the basis of the stationary subalgebra.
	\item The embedding of the symmetry algebra into $\fso(5,3)$ uses the basis $\{e_0,e_1,e_2,H_1,H_2,\check e_0, \check e_1, \check e_2\}$ of the subalgerba $\fsl(3,\R)\subset \fso(5,3)$ and the basis elements $R_{i,j}$, $S_{i,j}$ for two irreducible $\fsl(3,\R)$ submodules, as defined in Section~\ref{sec:letoile}.
	\item The systems of PDEs use the same notation $Z_1$ $Z_2$ for the left-invariant vector fields on the symmetry group, and $u$ denotes an unknown function on this group. 
\end{itemize}

\smallskip\noindent
$\mathbf{(O)}$ Symmetry algebra: $\fsl(3,\R)$.

Embedding: $\ad\colon \fsl(3,\R)\to \fso(5,3)\subset \fsl(8,\R)$.

Equation: $Z_1^2u=Z_2^2u=0$, where   
\[
Z_1 =\tfrac{\partial}{\partial x}-\tfrac{y}{2} \tfrac{\partial}{\partial z},
\quad Z_2 =\tfrac{\partial}{\partial y}+\tfrac{x}{2} \frac{\partial}{\partial z}.
\]

\smallskip\noindent
$\mathbf{(I_0)}$ Symmetry algebra:
\[
[Z_1,Z_2] = -Z_0,\quad [H,Z_0]=-3Z_1,\quad [H,Z_1]=-2Z_1,\quad [H,Z_2]=-Z_2.
\]
Embedding:
\begin{align*}
	Z_0 &\mapsto e_0,\\
	Z_1 &\mapsto e_1,\\
	Z_2 &\mapsto e_2 + R_{-1,1},\\
	H &\mapsto \tfrac{5}{3}H_1 + \tfrac{4}{3}H_2.
\end{align*}
Equation:
\begin{align*}
	Z_1^2u&=0,\\
	Z_2^2u&=6Z_1u.
\end{align*}
This is the special case of $(I_1)$ for $P_2=Q_{12}=0$.

\smallskip\noindent
$\mathbf{(I_1)}$ ($P_1=0$) Symmetry algebra:
\[
[Z_1,Z_2] = -Z_0 + P_2 Z_1,\quad [Z_0,Z_2] = P_2 Z_0 + Q_{12} Z_1.
\]
where $P_2,Q_{12}$ do not vanish simultaneously and are viewed up to the scaling
\[
(P_2,Q_{12})\mapsto (tP_2,t^2Q_{12}),\quad t\in \R^*.
\]

Embedding:
\begin{align*}
	Z_0 &\mapsto e_0,\\
	Z_1 &\mapsto e_1,\\
	Z_2 &\mapsto e_2 +\tfrac{2P_2}{3}H_1 + \tfrac{P_2}{3}H_2 - Q_{12}\check e_2 + R_{-1,1}.
\end{align*}

Equation:
\begin{align*}
Z_1^2u&=0,\\
Z_2^2u&=6Z_1u+2P_2Z_2u-(Q_{12}+P_2^2)u.
\end{align*}

This case admits an additional 4-th symmetry if and only if $Q_{12}+\tfrac{P_2^2}{25}=0$. If this relation is satisfied then this additional symmetry is represented by the matrix:
\[
H = \tfrac{5}{3}H_1 + \tfrac{4}{3}H_2 + \tfrac{P_2}{5}\check e_2. 
\]
So, we can assume that $Q_{12}+\tfrac{P_2^2}{25}\ne 0$ and normalize this expression to $\pm 1$, or $Q_{12}=\pm 1 - \tfrac{P_2^2}{25}$.

\smallskip\noindent
$\mathbf{(I_2)}$ ($Q_{21}=P_1^2, P_1\ne 0$) Symmetry algebra:
\begin{align*}
[Z_1,Z_2] &= -Z_0 - P_1 Z_2,\\
[Z_0,Z_1] &= P_1 (Z_0+ P_1 Z_2), \\
[Z_0,Z_2] &= 0
\end{align*}
The parameter $P_1\ne 0$ is viewed up to the scaling $P_1\to t^2P_1$, $t\in \R^*$ and can be normalized to $P_1=\pm 1$.

Embedding:
\begin{align*}
	Z_0 &\mapsto e_0-\tfrac{P_1}{2}R_{-1,1}+3P_1^3R_{1,1}-6P_1^4R_{2,1},\\
	Z_1 &\mapsto e_1+\tfrac{P_1}{3}(H_1+2H_2)+P_1^2\check e_1,\\
	Z_2 &\mapsto e_2+R_{-1,1}-\tfrac{3P_1}{2}R_{0,1}+3P_1^3 R_{2,1}.
\end{align*}

Equation:
\begin{align*}
&Z_1(Z_1-2P_1)u =0,\\
&Z_2^2u =6Z_1u-9P_1u.
\end{align*}

\smallskip\noindent
$\mathbf{(I_2')}$ ($Q_{21}=\tfrac{1}{4}P_1^2, P_1\ne 0$) Symmetry algebra:
\begin{align*}
[Z_1,Z_2] &= -Z_0 - P_1 Z_2, \\
[Z_0,Z_1] &= P_1 (Z_0 + \tfrac{P_1}{4} Z_2),\\
[Z_0,Z_2] &= 0.\\
\end{align*}

Embedding:
\begin{align*}
	Z_0 &\mapsto e_0-\tfrac{P_1}{2}R_{-1,1}+\tfrac{3P_1^2}{4}R_{0,1}-\tfrac{3P_1^3}{4}R_{1,1}+\tfrac{3P_1^4}{8}R_{2,1},\\
	Z_1 &\mapsto e_1+\tfrac{P_1}{3}(H_1+2H_2)+\tfrac{P_1^2}{4}\check e_1,\\
	Z_2 &\mapsto e_2+R_{-1,1}-\tfrac{3P_1}{2}R_{0,1}++\tfrac{3P_1^2}{2}R_{1,1}-\tfrac{3P_1^3}{4} R_{2,1}.
\end{align*}

Equation:
\begin{align*}
&(2Z_1-P_1)(2Z_1-3P_1)u =0,\\
&Z_2^2u =6Z_1u-9P_1u.
\end{align*}

This case always admits an additional symmetry represented by the matrix
\[
 H = \tfrac{5}{3}H_1 + \tfrac{4}{3}H_2 + P_1 \check e_1.
\]
and is excluded from the final list of results.

\smallskip\noindent
$\mathbf{(II_0)}$ Symmetry algebra: 
\[
[Z_0,Z_1]=\tfrac{36}{5}Z_1,\quad [Z_0,Z_2]=-\tfrac{36}{5}Z_2,\quad [Z_1,Z_2]=-Z_0.
\]

Embedding:
\begin{align*}
	Z_0&\mapsto e_0 + \tfrac{6}{5}(H_1-H_2)+\tfrac{2916}{25}\check e_0,\\
	Z_1&\mapsto e_1+S_{1,-1}-\tfrac{54}{5}\check e_2,\\
	Z_2&\mapsto e_2+R_{-1,1}-\tfrac{54}{5}\check e_2.
\end{align*}

Equation: 
\begin{align*}
	Z_1^2u &=-6Z_2u,\\
	Z_2^2u &=6Z_1u.
\end{align*}

\smallskip\noindent
$\mathbf{(II_1)}$ Symmetry algebra:
\begin{align*}
	[Z_0,Z_1]&=P_1(Z_0 - P_2 Z_1 + P_1 Z_2),\\
	[Z_0,Z_2] &=P_2(Z_0 - P_2 Z_1 + P_1 Z_2),\\
	[Z_1,Z_2]&= -Z_0 + P_2Z_1 - P_1Z_2,
\end{align*}
where $P_1P_2=-9$.

Embedding:
\begin{align*}
Z_0&\mapsto e_0 - \tfrac{1}{2}(P_1R_{-1,1}-P_2S_{1,-1})+3(P_1^3R_{1,1}-P_2^3S_{1,1})-6(P_1^4R_{2,1}-P_2^4S_{1,2}) \\
&\qquad\qquad + \tfrac{3}{2}(H_1-H_2)+9(P_1\check e_1-P_2\check e_2)-\tfrac{567}{4}\check e_0,
\\
Z_1&\mapsto e_1+S_{1,-1}-\tfrac{3P_2}{2}S_{1,0}-3P_2^3S_{1,2}+\tfrac{P_1}{3}(H_1+2H_2)+P_1^2\check e_1-\tfrac{27}{2}\check e_2 +\tfrac{9P_1}{2}\check e_0,\\
Z_2&\mapsto e_1+R_{-1,1}-\tfrac{3P_1}{2}R_{0,1}+3P_1^2R_{2,1}+\tfrac{P_2}{3}(2H_1+H_2)-\tfrac{27}{2}\check e_1+P_2^2\check e_2-\tfrac{9P_2}{2}\check e_0.
\end{align*}

Equation:
\begin{align*}
	(Z_1-P_1)^2u &=-6(Z_2-P_2)u+(P_1^2+3P_2)u,\\ 
	(Z_2-P_2)^2u &=6(Z_1-P_1)u+(P_2^2-3P_1)u.
\end{align*}

\smallskip\noindent
$\mathbf{(II_2)}$ Symmetry algebra:
\begin{align*}
	[Z_0,Z_1] &=P_1 (Z_0 - \tfrac{P_2}{4} Z_1 + \tfrac{P_1}{4} Z_2),\\
	[Z_0,Z_2] &=P_2 (Z_0 - \tfrac{P_2}{4} Z_1 + \tfrac{P_1}{4} Z_2),\\
	[Z_1,Z_2] &= -Z_0 + P_2Z_1 - P_1Z_2,
\end{align*}
where $P_1P_2=-144$.

Embedding:
\begin{align*}
	Z_0&\mapsto e_0 - \tfrac{1}{2}(P_1R_{-1,1}-P_2S_{1,-1})+\tfrac{3}{4}(P_1^2R_{0,1}-P_2^2S_{1,0})
	+\tfrac{3}{4}(P_1^3R_{1,1}-P_2^3S_{1,1}) \\
	&\qquad\qquad
	+\tfrac{3}{8}(P_1^4R_{2,1}-P_2^4S_{1,2})  + 6(H_1-H_2)+324\check e_0,
	\\
	Z_1&\mapsto e_1+S_{1,-1}-\tfrac{3P_2}{2}S_{1,0}+\tfrac{3P_2^2}{2}S_{1,1}-\tfrac{3P_2^3}{4}S_{1,2}+\tfrac{P_1}{3}(H_1+2H_2)+\tfrac{P_1^2}{4}\check e_1-54\check e_2,\\
	Z_2&\mapsto e_1+R_{-1,1}-\tfrac{3P_1}{2}R_{0,1}+\tfrac{3P_1^2}{2}R_{1,1}-\tfrac{3P_1^3}{4}R_{2,1}+\tfrac{P_2}{3}(2H_1+H_2)-54\check e_1+\tfrac{P_2^2}{4}\check e_2.
\end{align*}

Equation:
\begin{align*}
	(Z_1-P_1)^2u &=-6(Z_2-P_2)u+(\tfrac14 P_1^2+3P_2)u,\\ 
	(Z_2-P_2)^2u &=6(Z_1-P_1)u+(\tfrac14 P_2^2-3P_1)u.
\end{align*}

\vskip8mm
\begin{minipage}{\textwidth}\noindent
	Boris DOUBROV, \\
	Belarusian State University,\\
	Nezavisimosti ave. 4, 220030 Minsk, Belarus\\
	Email:  doubrov@bsu.by
\end{minipage}

\vskip8mm
\begin{minipage}{\textwidth}\noindent
	Tohru MORIMOTO,\\
	Institut Kiyoshi Oka de Math\'ematiques,\\
	Nara Women's University, \\
	Nara 630-8506, Japan\\
	Email: morimoto@cc.nara-wu.ac.jp
\end{minipage}

\end{document}